\documentclass[hidelinks,onefignum,onetabnum]{siamart220329} 

\pdfoutput=1

\usepackage{color}

\usepackage{cleveref}
\usepackage{mathtools} 
\usepackage{amsmath}
\newcommand{\dx}{\,{\mathrm d}x}
\newcommand{\ds}{\,{\mathrm d}s}

\newcommand{\SL}{{\mathcal{S}_{\mathcal{L}}}(\tau)}
\newcommand{\SN}{{\mathcal{S}_{\mathcal{N}}}(\tau)}
\newcommand{\SLt}{{\mathcal{S}_{\mathcal{L}}\left(\frac{\tau}{2}\right)}}


\usepackage{lipsum}
\usepackage{amsfonts}
\usepackage{graphicx}
\usepackage{epstopdf}
\usepackage{algorithmic}

\setlength{\textwidth}{145mm} 

\ifpdf
  \DeclareGraphicsExtensions{.eps,.pdf,.png,.jpg}
\else
  \DeclareGraphicsExtensions{.eps}
\fi


\newsiamremark{remark}{Remark}
\newsiamthm{exmp}{Example}
\newsiamremark{hypothesis}{Hypothesis}
\crefname{hypothesis}{Hypothesis}{Hypotheses}
\newsiamthm{claim}{Claim}

\headers{Strang splitting with Neumann boundary condition}{C. Quan, Z. Tan and Y. Wu}

\title{Stability and Convergence of Strang Splitting Method for the Allen--Cahn Equation with Homogeneous Neumann Boundary Condition\thanks{Corresponding authors: Chaoyu Quan and Zhijun Tan}
}

\author{Chaoyu Quan\thanks{School of Science and Engineering,  The Chinese University of Hong Kong,  Shenzhen,  518172,Guangdong, People's Republic of China
    (\email{quanchaoyu@cuhk.edu.cn}).}
        \and Zhijun Tan\thanks{School of Computer Science and Engineering, Sun Yat-sen University, Guangzhou 510006, People's Republic of China
(\email{tzhij@mail.sysu.edu.cn}).}
\and Yanyao Wu\thanks{School of Computer Science and Engineering, Sun Yat-sen University, Guangzhou 510006, People's Republic of China
(\email{wuyy96@mail2.sysu.edu.cn}).}
}

\usepackage{amsopn}

\ifpdf
\hypersetup{
  pdftitle={Stability and convergence analysis of Strang splitting method for the Allen--Cahn equation with homogeneous Neumann boundary condition},
  pdfauthor={C. Quan, Z. Tan, and Y. Wu}
}
\fi

\sloppy

\begin{document}

\maketitle

\begin{abstract}
The Strang splitting method has been widely used to solve nonlinear reaction-diffusion equations, with most theoretical convergence analysis assuming periodic boundary conditions. However, such analysis presents additional challenges for the case of  homogeneous Neumann boundary condition. In this work the Strang splitting method with variable time steps is investigated for solving the Allen--Cahn equation with homogeneous Neumann boundary conditions. Uniform $H^k$-norm stability is established under the assumption that the initial condition $u^0$ belongs to the Sobolev space $H^k(\Omega)$ with integer $k\ge 0$, using the Gagliardo--Nirenberg interpolation inequality and the Sobolev embedding inequality. Furthermore, rigorous convergence analysis is provided in the $H^k$-norm for initial conditions $u^0 \in H^{k+6}(\Omega)$, based on the uniform stability. Several numerical experiments are conducted to verify the theoretical results, demonstrating the effectiveness of the proposed method.
\end{abstract}

\begin{keywords}
Neumann boundary condition, Strang splitting, Allen--Cahn equation, stability and convergence
\end{keywords}

\begin{MSCcodes}
65M12, 65M15
\end{MSCcodes}

\section{Introduction}
In this work, we consider the following Allen--Cahn equation \cite{allen1979microscopic}  with  homogeneous Neumann boundary condition
    \begin{equation}
        \left\{
        \begin{aligned}
            &\partial_t u =\varepsilon^2 \Delta u-f(u), && (t,x)\in (0,T)\times \Omega, \\
            & u(0,x) = u^0(x), &&\mbox{in } \Omega, \\
            & \frac{\partial u}{\partial \mathbf{n}}= 0, &&\mbox{on } \Gamma,
        \end{aligned}
        \right.
        \label{eq:governing1}
    \end{equation}
    whose energy functional is 
    \begin{align}
        E(u) = \int_{\Omega} \left(\frac{\varepsilon^2}{2}|\nabla u|^2 + F(u)\right)  \dx.
    \end{align}
    Here, the spatial domain $\Omega$ is a bounded open set in ${\mathbb{R}^d}~(d \le 3)$ with a $C^1$ boundary, $\mathbf{n}$ is the unit outward normal vector on the boundary $\Gamma$, $f(u)=u^3-u$ is taken as the derivative of the potential $F(u)=\frac{1}{4}(u^2-1)^2$, and the parameter $\varepsilon^2$ is the mobility constant coefficient. It is well-known that the Allen--Cahn equation is the $L^2$ gradient flow of $E(u)$.
    
    Operator splitting methods have been widely applied to solve differential equations. The key concept of operator splitting is to decompose a differential equation into a sequence of simpler problems \cite{mclachlan2002splitting,Blanes_Casas_Murua_2024}. Two well-known splitting methods are the Lie--Trotter scheme \cite{reed1980methods} and Strang splitting \cite{strang_construction_1968}. 
    For the Hamilton--Jacobi system, Glowinski, Leung and Qian develop an operator splitting method for computing effective Hamiltonians based on the Lie scheme in \cite{glowinski2018simple}, which is applicable to both convex and non-convex Hamiltonians.
    For the Hamiltonian system describing the pendulum, one natural splitting form comes from separating the contributions of the kinetic energy and the potential energy. In this case, the Strang splitting scheme reduces to the St{\"o}rmer--Verlet method \cite{hairer2003geometric}. For time-dependent Schr{\"o}dinger equations, it has been proved that Strang splitting achieves second-order time accuracy when applied to pseudo-spectral discretizations of the Schr{\"o}dinger equation \cite{jahnke2000error,bao2023improved}. Moreover, higher-order splittings have been developed for the semiclassical time-dependent Schr{\"o}dinger equation in \cite{blanes2020high}. Operator splitting methods are also effective within quantum physics, including Gross--Pitaevskii equation \cite{bao2005fourth}, the Dirac equation \cite{bao2020super} and the Klein--Gordon equation \cite{bao2022uniform}. For the reaction-diffusion equations, Liu, Wang and Wang in \cite{liu2021structure} propose and analyze a positivity-preserving, energy-stable numerical scheme for a certain type of reaction-diffusion system, followed by a detailed convergence analysis \cite{liu2022convergence}. In \cite{li2017convergence}, Li, Qiao and Zhang prove the global error estimate in discrete $L^2$-norm for the Strang splitting for the epitaxial growth model with slope selection on uniform time meshes. In \cite{lan2023operator}, Lan et al. use operator splitting to solve the mass-conserving convective Allen--Cahn equation, which preserves the discrete maximum principle and conserves the mass.

    In addition to operator splitting method \cite{cheng2015fast,li2017convergence,li_stability_2022-1,li_stability_2022}, we mention that a variety of structure-preserving numerical schemes have been well-developed for solving gradient flows, including the implicit-explicit (IMEX) methods \cite{chen1998applications,tang2016implicit,fu2022unconditionally}, invariant energy quadratization (IEQ) method \cite{yang2016linear}, scalar auxiliary variable (SAV) method \cite{shen2018scalar,akrivis2019energy}, integrating factor Runge--Kutta (IFRK) method \cite{ju2021maximum,li2021stabilized}, exponential time differencing (ETD) method \cite{du2019maximum,li2021unconditionally,fu2022energy,fu2024higher} and so on.

 Suppose time levels $0=t_0<t_1<t_2<\cdots<t_N=T$ with the time steps $\tau_k \coloneqq t_k-t_{k-1}$ for $1\le k \le N$. The Strang splitting is written as
    \begin{align} \label{eq:StrangNonU}
        u^{n+1} = \mathcal{S}_{\mathcal{L}}\left(\frac{\tau_{n+1}}{2}\right)\mathcal{S}_{\mathcal{N}}\left(\tau_{n+1}\right)\mathcal{S}_{\mathcal{L}}\left(\frac{\tau_{n+1}}{2}\right)u^n,
    \end{align}
    where $\tau_{n+1} >0$ is the time step, $\mathcal{S}_{\mathcal{L}}(\frac{\tau_{n+1}}{2}) =\exp\left(\frac{\tau_{n+1}}{2}\varepsilon^2\Delta\right)$ is the linear propagator with homogeneous Neumann boundary condition and $\mathcal{S}_{\mathcal{N}}$ is the solution operator for the nonlinear part 
    \begin{align} \label{eq:SN(v)}
        \partial_t u = -f(u).
    \end{align}
 Here, $\mathcal{S}_{\mathcal{N}}(\tau_{n+1}) v$ can be calculated explicitly by solving an ordinary differential equation if the initial condition $v$ is given, which is called ``exact splitting'' (see other examples in \cite{bader2011fourier,bernier2021exact}). One difficulty lies in the error estimate between the numerical solution and the exact solution. In \cite{jahnke2000error}, Jahnke and Lubich derive the error bounds of using Strang splitting to solve the linear initial problem $\partial_t u=(A+B)u$ with $A$ generating a strongly continuous semigroup and with bounded $B$ with the periodic boundary condition. However, $\mathcal{S}_{\mathcal{N}}$ is a nonlinear solver in \eqref{eq:StrangNonU}. Still with the periodic boundary condition, Blanes et al. establish the convergence analysis of high-order exponential operator splitting method for nonlinear reaction-diffusion equation 
    \cite[Section 4.3, Theorem 1]{blanes2024splittingComplex}, where Lie derivatives and iterated commutators are discussed.
    In \cite{li_stability_2022-1,li_stability_2022}, the energy dissipation law of the Strang splitting method is established in the case of uniform time step with periodic boundary condition. The authors construct a modified energy close to the original within $\mathcal O(\tau)$. However, since the modified energy is related to the time step, it is nontrivial to generalize such energy analysis to the nonuniform time step case. Without energy dissipation law, the stability analysis in \cite{li_stability_2022-1} will encounter a challenge in proving the $H^k$-norm stability of the numerical solution. Therefore, it is necessary to reconstruct the Sobolev norm stability of Strang splitting method with variable time steps.   
    
    In this work, we aim to establish the uniform stability and second-order convergence of the Strang splitting method under certain initial regularity assumptions for the Allen--Cahn equation with homogeneous Neumann boundary condition, rather than the periodic boundary condition. Note that compared to the case of periodic boundary, when using the integration by parts for Neumann boundary condition, some additional boundary integrals could appear with high-order derivatives and consequently shall be estimated further. Our investigation addresses three main aspects. Firstly, we prove the $H^k$-norm regularity of the Allen--Cahn equation using the Gagliardo--Nirenberg interpolation inequality and the Sobolev embedding inequalities. Secondly, we establish a uniform $H^k$-norm bound of $u^n$. This consequently provides an energy bound of the numerical solution.
    Thirdly, we rigorously prove the $H^k$-norm convergence of the Strang splitting method for the Allen--Cahn equation, under the regularity assumptions of initial condition, i.e., $u^0\in H^{k+6}(\Omega)$. In addition, we conduct some experiments to verify the convergence rate and show the efficiency of the adaptive time-stepping strategy. Note that the notations $C_i,~i\in \mathbb{N}$ are general constants, which might vary in different lemmas and theorems.

    This paper is organized as follows. In Section \ref{regularity}, we prove the regularity of the exact solution to Allen--Cahn equation with homogeneous Neumann boundary condition. In Section \ref{stability analysis}, we provide the $L^\infty$-norm and $H^k$-norm results of the numerical solution from the Strang splitting method. Rigorous $H^k$-norm error estimates are presented in Section \ref{section_error_estimate}. Numerical simulations are carried out in Section \ref{simulations}. We draw some conclusions in Section \ref{section_conclusion}.

\section{Regularity of the exact solution} \label{regularity}
We first prove the regularity of the exact solution to \eqref{eq:governing1}. The process and results of this proof are related to the stability and convergence discussed in the subsequent sections. For simplicity, we set $\mathcal{L}\coloneqq \varepsilon^2 \Delta$ in the following content. We use the  definition of $H^k$ norm as 
 \begin{equation}
     \left\| u \right\|_{H^k}^2 \coloneqq \sum_{|\alpha|\le k} \left\|D^{\alpha}u\right\|_{L^2}^2,
 \end{equation} where $\alpha$ is the multi-index of order $|\alpha|$. Let $\alpha = (i_1,i_2,\cdots,i_d)\in \mathbb{N}^d$, then $D^{\alpha} u(t,x)$ can be written as 
\begin{equation}
    \begin{aligned} \label{eq:multiindex}
 D^{\alpha} u(t,x):=\frac{\partial ^{|\alpha|}u(t,x)}{\partial x_1^{i_1} \partial x_2^{i_2}\cdots \partial x_d^{i_d}}.   
    \end{aligned}
\end{equation} 
\begin{theorem}\label{lemma_derivative}
    Assume $u^0 \in H^k(\Omega)$ with $k \ge 0$ and $\|u^0\|_{L^{\infty}}\le 1$. There exists a constant $C\ge 0$  depending on $(\Omega,d,T,\|u^0\|_{H^{k}})$, such that the solution $u(t)$ to \eqref{eq:governing1} satisfies
    \begin{equation} 
        \begin{aligned} \label{eq:Hkresult}
            \left\|u(t)\right\|_{H^k} \le e^{Ct}\left\|u^0\right\|_{H^k},  \quad \forall t \in (0,T].
        \end{aligned}
    \end{equation}
\end{theorem}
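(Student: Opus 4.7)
The plan is to proceed by induction on $k$, combining the maximum bound principle (MBP) with energy estimates. Since $f'(u) = 3u^2-1 \ge -1$ and $\|u^0\|_{L^\infty}\le 1$, the Allen--Cahn equation preserves the uniform bound $\|u(t)\|_{L^\infty}\le 1$ on $[0,T]$, which I will use throughout. For the base case $k=0$, testing \eqref{eq:governing1} against $u$ and invoking the Neumann condition to annihilate the diffusion boundary term yields $\tfrac{d}{dt}\|u\|_{L^2}^2 \le 2\|u\|_{L^2}^2$, and Gronwall closes the argument.

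For $k\ge 1$, I would first establish the nonlinear estimate $\|f(u)\|_{H^k} \le C\,\|u\|_{H^k}$. Expanding $D^\alpha(u^3)$ by Leibniz produces a sum of triple products $D^{\beta_1}u\,D^{\beta_2}u\,D^{\beta_3}u$ with $\beta_1+\beta_2+\beta_3 = \alpha$. Applying H\"older at exponents $p_i = 2k/|\beta_i|$ (with $L^\infty$ when $|\beta_i|=0$) followed by the Gagliardo--Nirenberg interpolation
\begin{equation*}
\|D^{\beta_i}u\|_{L^{p_i}} \le C\,\|u\|_{H^k}^{|\beta_i|/k}\,\|u\|_{L^\infty}^{1-|\beta_i|/k},
\end{equation*}
the exponents telescope, and the excess $L^\infty$ powers are absorbed by the MBP bound, yielding the desired control.

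For the inductive step, I would assume the estimate at order $k-1$, differentiate the PDE by $D^\alpha$ for every $|\alpha|\le k$, test against $D^\alpha u$, integrate by parts, and sum to reach the energy identity
\begin{equation*}
\tfrac{1}{2}\tfrac{d}{dt}\|u\|_{H^k}^2 + \varepsilon^2 \sum_{|\alpha|\le k}\|\nabla D^\alpha u\|_{L^2}^2 = \varepsilon^2 B(u) + R(u),
\end{equation*}
where $B(u) := \sum_{|\alpha|\le k}\int_\Gamma D^\alpha u\,\partial_n D^\alpha u\ds$ is a boundary contribution and $R(u) := -\sum_{|\alpha|\le k}\int_\Omega D^\alpha u\,D^\alpha f(u)\dx$ satisfies $|R(u)|\le C\|u\|_{H^k}^2$ by the previous paragraph. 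Once $|B(u)|$ is bounded by $C\|u\|_{H^k}^2$ plus lower-order pieces covered by the induction, Gronwall delivers the claim.

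The boundary contribution $B(u)$ is the main obstacle, because $D^\alpha u$ does not inherit a homogeneous Neumann condition from $u$ when $|\alpha|\ge 1$. I would handle it via trace inequalities of the form $\|v\|_{L^2(\Gamma)}^2 \le C\,\|v\|_{L^2(\Omega)}\|v\|_{H^1(\Omega)}$, combined with Young's inequality to absorb a small fraction of $\varepsilon^2\|\nabla D^\alpha u\|_{L^2}^2$ on the left and leave a remainder proportional to $\|u\|_{H^k}^2$ together with strictly lower-order pieces. A complementary, sometimes cleaner, route exploits the compatibility condition $\partial_n\Delta u = 0$ on $\Gamma$ (derived by differentiating $\partial_n u = 0$ in $t$ and feeding in the PDE): testing iteratively against powers of the Laplacian makes the boundary contribution at certain orders vanish outright, while elliptic regularity recovers the missing pieces of the $H^k$ norm.
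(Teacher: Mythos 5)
Your overall architecture matches the paper's: induction on $k$, the maximum principle $\|u(t)\|_{L^\infty}\le 1$, testing the $D^\alpha$-differentiated equation against $D^\alpha u$, and Gagliardo--Nirenberg/Sobolev control of the Leibniz expansion of $D^\alpha(u^3)$. Your Moser-type product estimate --- H\"older with exponents $p_i=2k/|\beta_i|$ followed by $\|D^{\beta_i}u\|_{L^{p_i}}\le C\|u\|_{H^k}^{|\beta_i|/k}\|u\|_{L^\infty}^{1-|\beta_i|/k}$ --- is a cleaner, single-stroke packaging of what the paper does by hand for $k=1,2,3$ (via the $L^4$ interpolation \eqref{eq:Gagliardo_Nirenberg} and the embeddings $H^1\hookrightarrow L^6$, $H^2\hookrightarrow L^\infty$) and then by induction for $k\ge 4$; the exponents do telescope since $\sum_i 1/p_i=|\alpha|/(2k)\le 1/2$, so that part is sound (modulo the usual additive lower-order term in Gagliardo--Nirenberg on a bounded domain, which is harmless here).

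The gap is in your primary treatment of the boundary term $B(u)$. For $|\alpha|=k$ the integrand $D^\alpha u\,\partial_{\mathbf n}(D^\alpha u)$ involves derivatives of $u$ of order $k+1$ on $\Gamma$, and the multiplicative trace inequality controls $\|\partial_{\mathbf n}(D^\alpha u)\|_{L^2(\Gamma)}$ only by $\|\nabla D^\alpha u\|_{L^2(\Omega)}^{1/2}\|\nabla D^\alpha u\|_{H^1(\Omega)}^{1/2}$, i.e.\ by a quantity of order $\|u\|_{H^{k+1}}^{1/2}\|u\|_{H^{k+2}}^{1/2}$. The only good term available for absorption on the left is $\varepsilon^2\sum_{|\alpha|\le k}\|\nabla D^\alpha u\|_{L^2}^2$, which is of order $\|u\|_{H^{k+1}}^2$; no application of Young's inequality removes the surviving $H^{k+2}$ factor, and under the hypothesis $u^0\in H^k(\Omega)$ that norm is not available (parabolic smoothing supplies it only with constants blowing up as $t\to 0^+$), so the Gr\"onwall argument does not close. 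To be fair, the paper does not resolve this either: its identity \eqref{eq:regularuty_1} is written as though $D^\alpha u$ inherited the homogeneous Neumann condition, i.e.\ the boundary integral is silently dropped. Your second, ``complementary'' route --- deriving $\partial_{\mathbf n}\Delta^m u=0$ on $\Gamma$ from $\partial_{\mathbf n}\partial_t u=0$ and the equation, testing against powers of $\mathcal L$ so that the boundary integrals vanish exactly, and recovering the full $H^k$ norm by elliptic regularity for the Neumann Laplacian --- is the one that can be made rigorous, and should be promoted from an aside to the actual argument; as written it is only a sketch.
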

\begin{proof}
We prove this by mathematical induction. It is sufficient to prove that there exists a constant $C_1\ge 0$  depending on $(\Omega,d,T,\|u^0\|_{H^{k}})$, such that the solution $u(t)$ to \eqref{eq:governing1} satisfies
    \begin{equation} \label{eq:induction}
        \begin{aligned}
            \frac{1}{2}\partial_t\left\|u(t)\right\|_{H^k}^2 \le C_1\left\|u(t)\right\|_{H^k}^2,  \quad \forall t \in (0,T],
        \end{aligned}
    \end{equation}
which will lead to the desired result \eqref{eq:Hkresult}.

It is well known that the exact solution satisfies the maximum principle, i.e. $\|u(t)\|_{L^{\infty}}\le 1,~\forall t\in \left[0,T\right]$, if $\|u^0\|_{L^{\infty}}\le 1$.
For any fixed $k \ge 0$ and multi-index $\alpha$ with $|\alpha|=k$, acting $D^{\alpha}$ on both sides of \eqref{eq:governing1}, we have
    \begin{equation}
        \left\{
        \begin{aligned} \label{eq:innerproduct_1}
            &\partial_t  (D^{\alpha}u) = \mathcal{L}  (D^{\alpha}u)+D^{\alpha}u-D^{\alpha}u^3, && (t,x)\in (0,T]\times \Omega, \\
            &  D^{\alpha}u(0,x) =  D^{\alpha}u^0(x), &&\mbox{on } \Gamma.
        \end{aligned}
        \right.
    \end{equation}
    Taking inner product with $D^{\alpha
    }u$ in the first equation of \eqref{eq:innerproduct_1}, we have
    \begin{equation} \label{eq:regularuty_1}
        \begin{aligned}
            \frac{1}{2} \partial_t\left\|D^{\alpha}u\right\|_{L^2}^2 =&  -\varepsilon^2\left\|\nabla  (D^{\alpha}u)\right\|_{L^2}^2+\left\| D^{\alpha}u\right\|_{L^2}^2-\left\langle D^{\alpha}u^3,  D^{\alpha}u\right\rangle.
        \end{aligned}
    \end{equation}
    The case of $k=0$ implies \begin{equation} \label{eq:k0}
        \begin{aligned}
            \frac{1}{2} \partial_t\left\|u\right\|_{L^2}^2 \le &  \left\| u\right\|_{L^2}^2.
        \end{aligned}
    \end{equation}
    Solving this differential inequality quickly leads to \eqref{eq:Hkresult}.
    From \eqref{eq:regularuty_1}, we have \begin{equation}\label{eq:regularity}
        \begin{aligned} 
\frac{1}{2} \partial_t\left\|D^{\alpha}u\right\|_{L^2}^2 
 \le&\left\|D^\alpha u\right\|_{L^2}^2-\left\langle 3 u^2 D^\alpha u, D^\alpha u\right\rangle-\sum_{\substack{\alpha_1+\alpha_2+\alpha_3=\alpha \\ \left|\alpha_1\right| ,\left|\alpha_2\right| ,\left|\alpha_3\right| \le k-1}}  \left\langle D^{\alpha_1} uD^{\alpha_2} u D^{\alpha_3} u,D^\alpha u\right\rangle \\
 \le&\left\|D^\alpha u\right\|_{L^2}^2+\sum_{\substack{\alpha_1+\alpha_2+\alpha_3=\alpha \\ \left|\alpha_1\right| ,\left|\alpha_2\right| ,\left|\alpha_3\right| \le k-1}}  \left\|D^{\alpha_1} u D^{\alpha_2} u D^{\alpha_3} u\right\|_{L^2}\left\|D^\alpha u\right\|_{L^2},
        \end{aligned}
    \end{equation}
where $\alpha_1$, $\alpha_2$ and $\alpha_3$ are multi-indices with the same length as $\alpha$. In the following content, we first prove \eqref{eq:induction} for $k=1,2,3$, and then we employ mathematical induction to prove \eqref{eq:induction} for $k\ge 4$. 

We begin with the case of $k=1$ (implying that $D^\alpha = \partial_{x_i}$ for some $1\leq  i\leq d$). Combining \eqref{eq:k0} and \eqref{eq:regularity}, we have 
\begin{equation} 
    \begin{aligned} \label{eq:k1}
        \frac{1}{2} \partial_t\left\|u\right\|_{H^1}^2 \le &  \left\| u\right\|_{H^1}^2. 
    \end{aligned}
\end{equation}
Then we can deduce that 
    \begin{equation}
        \begin{aligned}
           \left\|u(t)\right\|_{H^1} \le e^{t}\left\|u^0\right\|_{H^1}\le e^{T}\left\|u^0\right\|_{H^1},  \quad \forall t \in (0,T].
        \end{aligned}
    \end{equation}

We next consider the case of $k=2$. Using the maximum principle and the interpolation inequality of Gagliardo--Nirenberg \cite[Chapter 9, Comment 3.C, Example 1]{brezis2011functional}:
\begin{align} \label{eq:Gagliardo_Nirenberg}
    \left\|\frac{\partial u}{\partial x_i}\right\|_{L^4}\le C_2\left\|u\right\|_{H^2}^{\frac{1}{2}}\left\|u\right\|_{L^\infty}^{\frac{1}{2}},
\end{align} 
where $C_2$ is a constant depending on $(\Omega,d)$, we obtain 
        \begin{equation} \label{eq:xixj}
            \begin{aligned}
              \left\|\frac{\partial u}{\partial x_i}\frac{\partial u}{\partial x_j} \right\|_{L^2} & =\left(\int_{\Omega} \left(\frac{\partial u}{\partial x_i}\right)^2 \left(\frac{\partial u}{\partial x_j}\right)^2 \dx\right)^{\frac{1}{2}} \\
        &\le \left(\int_{\Omega} \frac{1}{2} \left(\frac{\partial   u}{\partial x_i}\right)^4 + \frac{1}{2} \left(\frac{\partial   u}{\partial x_j}\right)^4 \dx\right)^{\frac{1}{2}} \le C_3\left\|u\right\|_{H^2},
            \end{aligned}
        \end{equation}
where $C_3$ is a constant depending on $(\Omega,d)$.
Using \eqref{eq:xixj} and maximum principle, we have 
 \begin{align} \label{eq:uxiuxju}
    \left\|u\partial_{x_i} u \partial_{x_j} u\right\|_{L^2}&\le\left\|\partial_{x_i} u \partial_{x_j} u  \right\|_{L^2}\le C_3 \left\|u\right\|_{H^2}.
 \end{align}
Then from \eqref{eq:regularity} and \eqref{eq:uxiuxju}, we have
 \begin{equation}
 \begin{aligned} \label{eq:H2D}
     \frac{1}{2} \partial_t\left\|\partial_ {x_{i}x_j} u\right\|_{L^2}^2 &\leq\left\|\partial_{x_i x_j} u\right\|_{L^2}^2+C_4\left\|u\partial_{x_i} u \partial_{x_j}u \right\|_{L^2}\left\|\partial_{x_i x_j} u\right\|_{L^2},\\
& \leq \left\|\partial_{x_i x_j} u\right\|_{L^2}^2+C_3C_4\left\|u\right\|_{H^2}^2,
 \end{aligned}
 \end{equation}
where $C_4\ge 0$ is a constant depending on $d$. 
Summing over with respect to $i$ and $j$ in \eqref{eq:H2D}, we have
 \begin{equation} \label{eq:D}
 \begin{aligned}
     \frac{1}{2} \partial_t\left(\sum_{|\alpha|=2}\left\|D^\alpha u\right\|_{L^2}^2\right) &\leq \sum_{|\alpha|=2}\left\|D^\alpha u\right\|_{L^2}^2+C_5\|u\|_{H^{2}}^2
        \leq C_6\|u\|_{H^{2}}^2,
 \end{aligned}
 \end{equation}
 where $C_5 \ge 0$ and $C_6 \ge 0$ are constants depending on $(\Omega,d)$.
Combining \eqref{eq:k1} and \eqref{eq:D}, we have
\begin{equation} \label{eq:k2}
 \begin{aligned}
     \frac{1}{2} \partial_t\left\| u\right\|_{H^2}^2 \leq C_7\|u\|_{H^{2}}^2,
 \end{aligned}
 \end{equation}
 where $C_7 \ge 0$ is a constant depending on $(\Omega,d)$.
We then conclude that  
\begin{align} \label{eq:uH2}
   \|u(t)\|_{H^2}\le e^{C_7t}\|u^0\|_{H^2} \le e^{C_7T}\|u^0\|_{H^2},\quad \forall t \in (0,T].  
\end{align}

In the case of $k=3$, using the maximum principle, \eqref{eq:xixj}, \eqref{eq:uH2} and the following Sobolev embedding inequality \cite[Section 5.6.3, Theorem 6]{evans_partial_2022} for $d\leq 3$:
$$
    H^2(\Omega) \hookrightarrow L^{\infty}(\Omega),
$$
we have
\begin{equation} \label{eq:2inequality}
    \begin{aligned}
        &\left\|\partial_{x_i} u\partial_{x_j} u \partial_{x_m} u \right\|_{L^2} \le \left\|\partial_{x_i}u\partial_{x_j}u\right\|_{L^2} \left\|\partial_{x_m} u \right\|_{L^\infty} \le C_8  \left\| u \right\|_{H^3}, \\
        &\left\|u\partial_{x_ix_j} u \partial_{x_m} u \right\|_{L^2} \le \left\|\partial_{x_ix_j}u\right\|_{L^2} \left\|\partial_{x_m} u \right\|_{L^\infty} \le C_9  \left\| u \right\|_{H^3}, 
    \end{aligned}
\end{equation}
where $C_8$ and $C_9$ depend on $(\Omega,d,T,\|u^0\|_{H^2})$. From \eqref{eq:regularity} and \eqref{eq:2inequality}, we have
 \begin{equation}
 \begin{aligned}
\frac{1}{2} \partial_t\left\|D^{\alpha} u\right\|_{L^2}^2 & \leq \left\|D^{\alpha} u\right\|_{L^2}^2+C_{10}\left\|D^{\alpha} u\right\|_{L^2}\left\| u \right\|_{H^3},
 \end{aligned}
 \end{equation}
where $C_{10}$ depends on $(\Omega,d,T,\|u^0\|_{H^2})$.
Using similar strategy as the case of $k=2$, we have
\begin{equation} \label{eq:k3}
 \begin{aligned}
     \frac{1}{2} \partial_t\left\| u\right\|_{H^3}^2 \leq C_{11}\|u\|_{H^{3}}^2,
 \end{aligned}
 \end{equation}
where $C_{11}$ depends on $(\Omega,d,T,\|u^0\|_{H^2})$.
We then conclude that  
\begin{align} \label{eq:uH3}
   \|u(t)\|_{H^3} \le e^{C_{11}t}\|u^0\|_{H^3}\le e^{C_{11}T}\|u^0\|_{H^3},\quad \forall t \in (0,T].  
\end{align}

Suppose that for some fixed $k \geq 3$, there exists a constant depending only on $(\Omega,d,T,\|u^0\|_{H^k})$, such that \eqref{eq:induction} holds true. We now prove the case of $|\alpha|=k+1$. 
Without loss of generality, we assume $|\alpha_3|\le|\alpha_2|\le|\alpha_1|\le k$. In the cases of $|\alpha_1|\le k-1$, using the Sobolev embedding inequality 
\cite[Section 5.6.3, Theorem 6]{evans_partial_2022} for $d\leq 3$:
$$
    H^1(\Omega) \hookrightarrow L^{6}(\Omega),
$$
we have
\begin{equation} \label{eq:inequality1}
    \begin{aligned}
         \left\|D^{\alpha_1} u D^{\alpha_2} u D^{\alpha_3} u\right\|_{L^2} &\le \left(\int_{\Omega} \frac{\left(D^{\alpha_1} u\right)^6+\left(D^{\alpha_2} u\right)^6+\left(D^{\alpha_3} u\right)^6}{3}\dx\right)^{\frac{1}{2}}
    \le C_{12}\|u\|_{H^{k}}^3\\
    &\le C_{13}\|u\|_{H^{k+1}},
    \end{aligned}
\end{equation}
where $C_{12}$ is the Sobolev constant and $C_{13}$ depends on $(\Omega,d,T,\|u^0\|_{H^{k}})$. 
In the case of $|\alpha_1|=k$, $|\alpha_2|=1$, $|\alpha_3|=0$, using maximum principle and the Sobolev embedding inequality 
\cite[Section 5.6.3, Theorem 6]{evans_partial_2022} for $d\leq 3$:
$$
    H^2(\Omega) \hookrightarrow L^{\infty}(\Omega),
$$ we have
\begin{equation} \label{eq:inequality2}
    \begin{aligned}
       \left\|D^{\alpha_1} u D^{\alpha_2} u D^{\alpha_3} u\right\|_{L^2} &\le \left\|D^{\alpha_1} u \right\|_{L^2}\left\|D^{\alpha_2} u \right\|_{L^\infty}\left\|D^{\alpha_3} u \right\|_{L^\infty}\le C_{14}\left\|u \right\|_{H^{k}}\left\|u \right\|_{H^{3}}\\
       &\le C_{15}\left\|u \right\|_{H^{k+1}}, 
    \end{aligned}
\end{equation}
where $C_{14}$ is the Sobolev constant and $C_{15}$ depends on $(\Omega,d,T,\|u^0\|_{H^{3}})$.
From \eqref{eq:regularity}, \eqref{eq:inequality1} and \eqref{eq:inequality2}, we have
\begin{equation} \label{eq:D^alpha}
\begin{aligned}
& \frac{1}{2} \partial_t\left\|D^\alpha u\right\|_{L^2}^2 
 \le\left\|D^\alpha u\right\|_{L^2}^2+  C_{16}\left\|u \right\|_{H^{k+1}}\left\|D^\alpha u\right\|_{L^2},
\end{aligned}
\end{equation}
where $C_{16} \ge 0$ depends on $(\Omega,d,T,\|u^0\|_{H^{k}})$. 
Using the induction hypothesis, we have
\begin{equation}
    \begin{aligned}
    \frac{1}{2} \partial_t\left\| u\right\|_{H^{k+1}}^2 \leq C_{17}\|u\|_{H^{k+1}}^2, 
    \end{aligned}
\end{equation}
where $C_{17}\ge 0$ depends on $(\Omega,d,T,\|u^0\|_{H^{k}})$. Therefore, we conclude that
\begin{align} \label{eq:uHk+1}
   \|u(t)\|_{H^{k+1}} \le e^{C_{17}t}\|u^0\|_{H^{k+1}}\le e^{C_{17}T}\|u^0\|_{H^{k+1}},\quad \forall t \in (0,T],  
\end{align}
which completes the proof.
\end{proof}
    
\section{Stability analysis}
    \label{stability analysis}
    In this section, we give two stability results of the numerical solution, namely, the $L^{\infty}$-norm stability and the $H^{k}$-norm stability.
    \subsection{Maximum principle}\label{section_MBP}
    First, we prove the maximum principle of the numerical solution of Strang splitting method with variable time steps.
    \begin{theorem}
        For the numerical solution of Strang splitting \eqref{eq:StrangNonU}, if $\left\|u^0\right\|_{L^{\infty}} \le 1$, it holds that $\sup_{n \ge 1} \left\|u^n\right\|_{L^{\infty}} \le 1$.
    \end{theorem}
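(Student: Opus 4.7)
The plan is to prove the maximum principle for each of the three sub-operators $\mathcal{S}_{\mathcal{L}}(\tau/2)$, $\mathcal{S}_{\mathcal{N}}(\tau)$, $\mathcal{S}_{\mathcal{L}}(\tau/2)$ in the composition \eqref{eq:StrangNonU}, and then conclude by induction on $n$ using the fact that a composition of $L^{\infty}$-contractive operators on the set $\{v:\|v\|_{L^\infty}\le 1\}$ is again $L^{\infty}$-contractive.

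First, I would handle the linear propagator $\mathcal{S}_{\mathcal{L}}(s)=\exp(s\varepsilon^2\Delta)$ with homogeneous Neumann boundary condition. The function $w(t,x)=(\mathcal{S}_{\mathcal{L}}(t)v)(x)$ solves the heat equation $\partial_t w=\varepsilon^2 \Delta w$ on $(0,\infty)\times\Omega$ with $\partial_\mathbf{n} w=0$ on $\Gamma$ and $w(0,\cdot)=v$. By the weak parabolic maximum principle for the Neumann problem, the spatial maximum and minimum of $w(t,\cdot)$ cannot exceed those of $v$. Hence if $\|v\|_{L^\infty}\le 1$, then $\|\mathcal{S}_{\mathcal{L}}(s)v\|_{L^\infty}\le 1$ for any $s\ge 0$.

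Next, I would handle the nonlinear propagator $\mathcal{S}_{\mathcal{N}}(s)$ associated with $\partial_t u=-f(u)=u-u^3$. Since this is a pointwise ODE in $x$, it suffices to study the scalar initial value problem $\dot y=y-y^3$, $y(0)=y_0$, which can be solved explicitly as $y(t)=y_0 e^{t}/\sqrt{1+y_0^2(e^{2t}-1)}$. A direct computation shows $|y(t)|\le 1$ whenever $|y_0|\le 1$; equivalently, the intervals $[-1,0]$, $[0,1]$ are invariant because $\pm 1$ are asymptotically stable equilibria attracting all initial data from $(-1,1)\setminus\{0\}$, while $\pm 1$ themselves are fixed. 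Applying this pointwise in $x$, I get $\|\mathcal{S}_{\mathcal{N}}(s)v\|_{L^\infty}\le 1$ whenever $\|v\|_{L^\infty}\le 1$.

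Finally, I combine the two invariance statements. Assume inductively that $\|u^n\|_{L^\infty}\le 1$ (the base case $n=0$ is the hypothesis). Setting $v_1=\mathcal{S}_{\mathcal{L}}(\tau_{n+1}/2)u^n$, $v_2=\mathcal{S}_{\mathcal{N}}(\tau_{n+1})v_1$, $u^{n+1}=\mathcal{S}_{\mathcal{L}}(\tau_{n+1}/2)v_2$, applying the two invariance results successively gives $\|v_1\|_{L^\infty}\le 1$, $\|v_2\|_{L^\infty}\le 1$, and $\|u^{n+1}\|_{L^\infty}\le 1$. Induction then yields $\sup_{n\ge 1}\|u^n\|_{L^\infty}\le 1$. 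The only step that requires some care rather than being automatic is the $L^\infty$-contractivity of the Neumann heat semigroup; the ODE part and the induction are essentially immediate once the invariant interval for $\dot y=y-y^3$ is identified.
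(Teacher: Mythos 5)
Your proposal is correct and follows essentially the same route as the paper: $L^\infty$-contractivity of the Neumann heat semigroup for the linear part, the explicit solution $y(t)=y_0e^{t}/\sqrt{1+y_0^2(e^{2t}-1)}$ showing invariance of $[-1,1]$ for the nonlinear part, and induction on $n$. The only cosmetic difference is that you invoke the parabolic maximum principle directly where the paper cites the contraction property of $\mathcal{S}_{\mathcal{L}}$; these are the same underlying fact.
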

    \begin{proof}
    First, it is easy to see that for any $\tau_{n+1}>0$,
    \begin{equation}
        \left\|\tilde u^n \right\|_{L^{\infty}} = \left\|\mathcal{S}_{\mathcal{L}}\left(\frac{\tau_{n+1}} {2}\right) u^n \right\|_{L^{\infty}}  \le \left\| u^n \right\|_{L^{\infty}}
    \end{equation}
as $\mathcal{S}_{\mathcal{L}}(\frac{\tau_{n+1}} 2)$ is a contraction operator (see for example \cite{du_maximum_2021}). 
    We then show that $\mathcal{S}_{\mathcal{N}}(\tau_{n+1})$ also preserves the maximum principle. It is easy to find that \eqref{eq:SN(v)} with initial condition $u|_{t=0} = \tilde{u}^{n}$ and final time $\tau_{n+1}$ can be solved explicitly and the solution is
    \begin{align} \label{eq:SN}
        \mathcal{S}_{\mathcal{N}}(\tau_{n+1})\tilde{u}^{n} = \frac{e^{\tau_{n+1}}\tilde{u}^{n}}{\sqrt{1+(e^{2\tau_{n+1}}-1)(\tilde{u}^n)^2}}.
    \end{align}
    If $\left\| \tilde u^n \right\|_{L^{\infty}} \le 1$, we have
    \begin{align} 
        \left\| \mathcal{S}_{\mathcal{N}}(\tau_{n+1})\tilde{u}^{n} \right\|_{L^{\infty}} \le 1. \label{eq:Sn} 
    \end{align}
    As a consequence, if $\left\| u^n \right\|_{L^{\infty}} \le 1$, we have
    \begin{equation}
    \begin{aligned}
        \left\|u^{n+1}\right\|_{L^{\infty}} =& \left\|\mathcal{S}_{\mathcal{L}}\left(\frac{\tau_{n+1}}{2}\right)\mathcal{S}_{\mathcal{N}}\left(\tau_{n+1}\right)\mathcal{S}_{\mathcal{L}}\left(\frac{\tau_{n+1}}{2}\right)u^n \right\|_{L^{\infty}} \\
        \leq &\left\|\mathcal{S}_{\mathcal{N}}\left(\tau_{n+1}\right)\mathcal{S}_{\mathcal{L}}\left(\frac{\tau_{n+1}}{2}\right)u^n \right\|_{L^{\infty}}  \leq 1. \label{Eq:MBP}
    \end{aligned}
    \end{equation}
    Since $\| u^0 \|_{L^{\infty}}\leq 1$, we then conclude that $\|u^n\|_{L^{\infty}}\leq 1$ holds for all $n\geq 0$.
The maximum principle for \eqref{eq:StrangNonU} is proved. 
\end{proof}

\subsection{$H^k$-norm stability}
\label{H^kstability}
We then provide a uniform $H^k$-norm estimate of the numerical solution $u^n$. For simplicity, we denote by $u(t)\coloneqq u(t,\cdot)$ the exact solution of \eqref{eq:governing1}.
\begin{theorem}
        Assume $u^0 \in H^{k}(\Omega)$ with $k\ge 0$ and $\left\|u^0\right\|_{\infty} \le 1$. For any $1\leq n\leq N$, the numerical solution $u^n$ of the Strang splitting method \eqref{eq:StrangNonU} satisfies
        \begin{align}
             \left\| u^{n} \right\|_{H^k} \le e^{C T} \left\| u^{0} \right\|_{H^k},
        \end{align}
        where $C>0$ is a constant depending only on $(\Omega,d,T,\left\| u^{0} \right\|_{H^k})$.
 \end{theorem}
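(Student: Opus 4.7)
The plan is to establish a one-step Sobolev growth estimate of the form $\|u^{n+1}\|_{H^k}\le e^{C\tau_{n+1}}\|u^n\|_{H^k}$, where the constant $C$ depends only on $(\Omega,d,T,\|u^0\|_{H^k})$. Telescoping this over $n$ then gives $\|u^n\|_{H^k}\le e^{Ct_n}\|u^0\|_{H^k}\le e^{CT}\|u^0\|_{H^k}$. I would proceed by induction on $k$: the base case $k=0$ follows from the maximum principle (which gives $\|u^n\|_{L^\infty}\le 1$ and hence $\|u^n\|_{L^2}\le |\Omega|^{1/2}$), and at the inductive step the lower-order Sobolev norms $\|u^n\|_{H^j}$ for $j<k$ are already uniformly bounded, which is what fuels the estimate at the current level.

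To derive the one-step bound I would control $\mathcal{S}_\mathcal{L}(\tau_{n+1}/2)$ and $\mathcal{S}_\mathcal{N}(\tau_{n+1})$ separately. For the nonlinear step, setting $w(s)=\mathcal{S}_\mathcal{N}(s)v$ which solves $\partial_s w = w-w^3$, I would reuse the structure of the proof of Theorem \ref{lemma_derivative}. Since the maximum principle of Section \ref{section_MBP} ensures $\|w(s)\|_{L^\infty}\le 1$ whenever $\|v\|_{L^\infty}\le 1$, applying $D^\alpha$ and testing against $D^\alpha w$ reproduces essentially inequality \eqref{eq:regularity}, only without the dissipative Laplacian term (which contributed a nonpositive sign there and is thus harmlessly absent). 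The same Gagliardo--Nirenberg and Sobolev-embedding arguments then yield $\tfrac{1}{2}\partial_s\|w\|_{H^k}^2\le C_1\|w\|_{H^k}^2$, so that $\|\mathcal{S}_\mathcal{N}(\tau_{n+1})v\|_{H^k}\le e^{C_1\tau_{n+1}}\|v\|_{H^k}$. For the linear step, the Neumann heat semigroup is a contraction on $L^2$ and, by analogous energy identities in which the boundary terms arising from integration by parts vanish because of the homogeneous Neumann condition, is non-expansive on $H^k$ as well, so $\|\mathcal{S}_\mathcal{L}(\tau_{n+1}/2)v\|_{H^k}\le \|v\|_{H^k}$. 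Putting the three sub-step bounds together in the composition $u^{n+1}=\mathcal{S}_\mathcal{L}(\tau_{n+1}/2)\mathcal{S}_\mathcal{N}(\tau_{n+1})\mathcal{S}_\mathcal{L}(\tau_{n+1}/2)u^n$ produces the desired one-step estimate.

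The main obstacle I expect is ensuring the constant $C_1$ depends only on $\|u^0\|_{H^k}$ rather than on the running norm $\|u^n\|_{H^k}$, which the Gagliardo--Nirenberg and Sobolev steps would otherwise feed on. This is precisely the role of the induction on $k$: at stage $k$ we may assume from the previous stage that $\|u^n\|_{H^j}\le C(\Omega,d,T,\|u^0\|_{H^j})$ uniformly in $n$ for every $j<k$, and combined with the $L^\infty$-bound these fixed bounds are what the Gagliardo--Nirenberg/Sobolev constants in the nonlinear estimate actually see, giving $C_1$ the required dependence. A secondary technical point is the $H^k$-non-expansivity of $\mathcal{S}_\mathcal{L}$ when the input does not satisfy the higher-order compatibility conditions $\partial_\mathbf{n}\Delta^j v=0$ at the boundary; this can be addressed by using the base Neumann condition together with integration by parts and some bookkeeping of boundary terms, or alternatively by expressing the estimate through powers of the Neumann Laplacian and interpolation, which suffices for the uniform $H^k$-boundedness needed here.
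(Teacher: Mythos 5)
Your proposal is correct and follows essentially the same route as the paper: bound $\mathcal{S}_{\mathcal{N}}(\tau)$ on $H^k$ by viewing it as the exact flow of $\partial_t u = u-u^3$ and rerunning the Gagliardo--Nirenberg/Sobolev-embedding argument of Theorem \ref{lemma_derivative}, use the $H^k$-contractivity of $\mathcal{S}_{\mathcal{L}}$, and telescope the resulting one-step bound $\|u^{n+1}\|_{H^k}\le e^{C\tau_{n+1}}\|u^n\|_{H^k}$. Your explicit induction on $k$ to keep the constant depending on $\|u^0\|_{H^k}$ rather than the running norms is exactly the bootstrapping the paper uses implicitly (it treats $H^1$ first and then appeals to the regularity proof), so no substantive difference.
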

    
    \begin{proof}
   For any $v\in H^{1}(\Omega)$ with $\left\|v\right\|_{L^\infty}\leq 1$, we have
    \begin{align}
        \left\| \mathcal{S}_{\mathcal{N}}(\tau_{n+1})v \right\| _{L^2} &= \left\|\frac{e^{\tau_{n+1}}v}{\sqrt{1+(e^{2\tau_{n+1}}-1)v^2}}\right\|_{L^2} \le e^{\tau_{n+1}}\left\|  v \right\| _{L^2}, \label{eq:SNnorm1}\\
        \left\| \nabla (\mathcal{S}_{\mathcal{N}}(\tau_{n+1})v) \right\| _{L^2} &= \left(\sum_{i=1}^d\left\|\frac{e^{\tau_{n+1}}}{(1+(e^{2\tau_{n+1}}-1)v^2)^{\frac{3}{2}}}\frac{\partial v}{\partial x_i}\right\|_{L^2}^2 \right)^{\frac{1}{2}}\le e^{\tau_{n+1}}\left\| \nabla v \right\| _{L^2}, \label{eq:SNnorm2}
    \end{align}
    which implies
    \begin{align} 
        \left\| \mathcal{S}_{\mathcal{N}}(\tau_{n+1})v \right\| _{H^1} \le e^{\tau_{n+1}}\left\|  v \right\| _{H^1}. \label{eq:H_1}
    \end{align}
Then we have
    \begin{equation}
\begin{aligned}
        \left\| u^{n+1} \right\|_{H^1}&=\left\| \mathcal{S}_{\mathcal{L}}\left(\frac{\tau_{n+1}}{2}\right)\mathcal{S}_{\mathcal{N}}(\tau_{n+1})\tilde{u}^n\right\|_{H^1} \\
        &\le \left\| \mathcal{S}_{\mathcal{N}}(\tau_{n+1})\tilde{u}^n\right\|_{H^1}\le e^{\tau_{n+1}}\left\|  \tilde{u}^n \right\| _{H^1} \le e^{\tau_{n+1}}\left\|  u^n \right\|_{H^1},
    \end{aligned}
    \end{equation}
    which yields the following
    \begin{align} \label{eq:ODEH1}
        \left\| u^{n} \right\|_{H^1} \le e^{T} \left\| u^{0} \right\|_{H^1}. 
    \end{align} 
Note that $\mathcal{S}_{\mathcal{N}}(\tau)v$ is the exact solution of the following equation at $t=\tau$:
\begin{equation} \label{eq:ODE}
        \left\{
        \begin{aligned}
            &\partial_t u =u-u^3, && t\in (0,\tau), \\
            & u(0) = v, &&\mbox{in } \Omega.
        \end{aligned}
        \right.
    \end{equation}
Using the same strategy in proving the regularity of the exact solution to \eqref{eq:governing1} in Theorem \ref{lemma_derivative}, we can find a constant $C\ge0$ depending only on $(\Omega,d,T,\left\| u^{0} \right\|_{H^k})$, such that
  for any $v\in H^{k}(\Omega)$ with $\|v\|_{L^\infty}\leq 1$,
  \begin{equation}
   \begin{aligned} \label{eq:regularity_ODE}
        \left\| \mathcal{S}_{\mathcal{N}}(\tau)v \right\| _{H^k} \le e^{C\tau}\left\|  v \right\| _{H^k}. 
    \end{aligned}
  \end{equation}
Since $\SL:H^k(\Omega)\longrightarrow H^k(\Omega)$ is a contraction operator, i.e. $\left\|\SL\right\|_{H^k}\le 1,~\forall \tau\ge 0$, we have  
   \begin{equation}
\begin{aligned}
        \left\| u^{n+1} \right\|_{H^k}&=\left\| \mathcal{S}_{\mathcal{L}}\left(\frac{\tau_{n+1}}{2}\right)\mathcal{S}_{\mathcal{N}}(\tau_{n+1})\tilde{u}^n\right\|_{H^k} \le \left\| \mathcal{S}_{\mathcal{N}}(\tau_{n+1})\tilde{u}^n\right\|_{H^k}\\
        &\le e^{C\tau_{n+1}}\left\|  \tilde{u}^n \right\| _{H^k} \le e^{C\tau_{n+1}}\left\|  u^n \right\|_{H^k}\le e^{CT}\left\|  u^0 \right\|_{H^k}.
    \end{aligned}
    \end{equation}
    \end{proof}
\begin{remark}
 Based on the $H^1$-norm stability in the previous subsection, we estimate the original energy for numerical solutions. Assume $u^0 \in H^1(\Omega)$. As a result of \eqref{eq:ODEH1}, we obtain an upper bound of $E(u^n)$:
    \begin{equation}
        \begin{aligned}
            E\left(u^n\right) &= \int_{\Omega} \left(\frac{\varepsilon^2}{2}|\nabla u^n|^2 + \frac{1}{4}\left(1-\left(u^n\right)^2\right)^2\right)  \dx \\
            & \le  \frac{\varepsilon^2}{2} \left\|u^n\right\|_{H^1}^2+\frac{1}{4}\left|\Omega\right| \le \frac{\varepsilon^2}{2} e^{2T}\left\|u^0\right\|_{H^1}^2+\frac{1}{4}\left|\Omega\right|.
        \end{aligned}
    \end{equation}  
In \cite{li_stability_2022-1}, a modified energy is constructed for the Strang splitting with uniform time steps. However, this modified energy is related to the time step, it is still unknown to establish the energy dissipation law for the nonuniform time step case.
\end{remark}

\begin{remark}
     Recently, there are some works on the stability of the variable step numerical schemes. For example, in \cite{chen2019second}, Chen et al. present and analyze the BDF2 numerical scheme for the Cahn--Hilliard equation on the nonuniform time meshes, incorporating a novel generalized discrete Gr{\"o}nwall-type inequality. Later, some stability results of the adaptive BDF2 scheme are established for the Allen--Cahn equation \cite{liao_energy_2020} and the diffusion equations \cite{liao2021analysis}.
\end{remark}

\section{Error estimates} \label{section_error_estimate}
    In this section, rigorous $H^k$-norm convergence is given for the Strang splitting for \eqref{eq:governing1} with variable time steps and homogeneous Neumann boundary condition. We denote by $\mathcal{T}(\tau)$ the solution operator mapping $u(0)$ to $u(\tau)$ for \eqref{eq:governing1}. We define the Strang splitting operator $\mathcal{S}(\tau_{n+1})$ as follows:
\begin{equation} \label{eq:S(tau)}
    \begin{aligned}
       \mathcal{S}(\tau_{n+1})\coloneqq \mathcal{S}_{\mathcal{L}}\left(\frac{\tau_{n+1}}{2}\right)\mathcal{S}_{\mathcal{N}}\left(\tau_{n+1}\right)\mathcal{S}_{\mathcal{L}}\left(\frac{\tau_{n+1}}{2}\right).
    \end{aligned}
\end{equation}
The maximum time step is defined as
\begin{equation} \label{eq:tau_max}
    \begin{aligned}
        \tau_{\max} \coloneqq \underset{1\le k \le N}{\max} \tau_k.
    \end{aligned}
\end{equation}
To estimate the $H^k$-norm error, we introduce some lemmas in the following. 
To simplify the notations, we denote 
\begin{equation} \label{eq:sl_Q}
    \begin{aligned}
        \mathcal{Q}(v)&\coloneqq \mathcal{S}_{\mathcal{L}}\left(\frac{\tau}{2}\right)v-\left(\mathcal{I}+\frac{\tau}{2} \mathcal{L}\right)v.
    \end{aligned}
\end{equation}
Note that the operator $\mathcal{Q}=(\tau\mathcal{L})^2\varphi_2(\tau \mathcal{L})$ with $\varphi_2(z)=\frac{e^z-1-z}{z^2}$.
Here, we define
\begin{equation} \label{eq:exponential operator}
    \begin{aligned}
      \varphi_k(z)\coloneqq\int_{0}^1e^{(1-\theta)z}\frac{\theta^{k-1}}{(k-1)!}\mathrm{d}\theta  
    \end{aligned}
\end{equation}
for $k\ge 1$ and $\varphi_0(z)=e^z$ as in \cite{hochbruck2010exponential}, which are commonly used in exponential integrators.
Next, let us estimate the Sobolev norms of $\mathcal{Q}(v)$.
\begin{lemma}{(Property of $\mathcal{Q}$)} \label{propertyQ}
    For any $v\in H^{4+|\alpha|}(\Omega)$, $ \left\| D^{\alpha} (\mathcal{Q}(v)) \right\|_{L^2}\le \frac{\tau^2 }{8} \left\|  D^{\alpha} (\mathcal{L}^2 v)\right\|_{L^2}$.
\end{lemma}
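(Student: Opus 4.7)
The plan is to start from the integral form of Taylor's theorem applied to the semigroup $\mathcal{S}_{\mathcal{L}}(\tau/2) = e^{(\tau/2)\mathcal{L}}$. Explicitly, from the identity
\begin{equation*}
e^{(\tau/2)\mathcal{L}} v = v + \frac{\tau}{2}\mathcal{L} v + \int_0^{\tau/2}\!\!\left(\frac{\tau}{2}-s\right)\mathcal{L}^2 e^{s\mathcal{L}} v \,\mathrm{d}s,
\end{equation*}
I would immediately get
\begin{equation*}
\mathcal{Q}(v) = \int_0^{\tau/2}\!\!\left(\frac{\tau}{2}-s\right) e^{s\mathcal{L}} \mathcal{L}^2 v \,\mathrm{d}s,
\end{equation*}
where I have used the fact that $\mathcal{L}^2$ commutes with its own semigroup $e^{s\mathcal{L}}$ on the appropriate domain. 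This Taylor-with-remainder viewpoint matches the hinted identity $\mathcal{Q} = (\tau\mathcal{L})^2 \varphi_2(\tau\mathcal{L})$ in the excerpt and is the natural way to extract a clean prefactor.

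Next I would apply $D^\alpha$, pull it under the integral, and move it past the semigroup and past $\mathcal{L}^2$, which is a pure spatial differential operator with constant coefficients. This yields
\begin{equation*}
D^\alpha \mathcal{Q}(v) = \int_0^{\tau/2}\!\!\left(\frac{\tau}{2}-s\right) e^{s\mathcal{L}}\bigl(D^\alpha \mathcal{L}^2 v\bigr)\,\mathrm{d}s.
\end{equation*}
Then I would invoke the $L^2$-contractivity of the Neumann heat semigroup, namely $\|e^{s\mathcal{L}} w\|_{L^2} \le \|w\|_{L^2}$ for all $w\in L^2(\Omega)$ and $s\ge 0$ (the same contraction property that was already used in the maximum-principle proof). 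Taking $L^2$ norms and using the triangle inequality under the integral gives
\begin{equation*}
\|D^\alpha \mathcal{Q}(v)\|_{L^2}
\le \|D^\alpha \mathcal{L}^2 v\|_{L^2}\int_0^{\tau/2}\!\!\left(\frac{\tau}{2}-s\right)\mathrm{d}s
= \frac{\tau^2}{8}\,\|D^\alpha \mathcal{L}^2 v\|_{L^2},
\end{equation*}
which is exactly the stated bound. The regularity hypothesis $v\in H^{4+|\alpha|}(\Omega)$ is precisely what makes $D^\alpha \mathcal{L}^2 v \in L^2(\Omega)$, so every term is well defined.

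The main obstacle, and the only subtle point, is the legitimacy of commuting $D^\alpha$ past the Neumann semigroup $e^{s\mathcal{L}}$: unlike the periodic case, differentiating $e^{s\mathcal{L}}w$ spatially does not in general land us back in a function obeying Neumann data. The cleanest justification is to interpret $e^{s\mathcal{L}}$ through its spectral representation in the Neumann eigenbasis, observe that $e^{s\mathcal{L}}$ extends as a bounded $L^2\to L^2$ operator to arbitrary data (not only to data satisfying the boundary condition), and note that the identity $D^\alpha e^{s\mathcal{L}} = e^{s\mathcal{L}} D^\alpha$ holds as an equality of distributions on $\Omega$ because both sides satisfy the same heat equation with the same initial datum $D^\alpha v$ when tested against smooth compactly supported functions. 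Once this is granted, the rest is the two-line Taylor-plus-contractivity argument above, and the factor $\tau^2/8$ is simply the value of $\int_0^{\tau/2}(\tau/2 - s)\,\mathrm{d}s$.
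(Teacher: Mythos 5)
Your estimate for the case $|\alpha|=0$ is correct and is essentially the paper's argument in a different guise: the paper diagonalizes $\mathcal{Q}=\phi(\frac{\tau}{2}\mathcal{L})$ with $\phi(z)=e^z-1-z$ in the Neumann eigenbasis and uses the scalar Taylor bound $\phi(z)\le z^2/2$ for $z\le 0$, which is exactly the pointwise-on-the-spectrum version of your operator-valued integral remainder combined with $L^2$-contractivity of the semigroup. Both routes produce the factor $\tau^2/8$ for the same reason, and both are sound for $|\alpha|=0$.

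The step that does not hold up is the one you yourself single out as the only subtle point: commuting $D^\alpha$ past the Neumann semigroup. Your justification --- that $D^\alpha e^{s\mathcal{L}}v$ and $e^{s\mathcal{L}}D^\alpha v$ both solve the heat equation with initial datum $D^\alpha v$ and are therefore equal as distributions --- is not valid, because the heat equation on a bounded domain has no uniqueness without boundary conditions; the two functions solve the same interior equation but carry different boundary behavior, and in general they are \emph{not} equal. Concretely, for $s>0$ the function $e^{s\mathcal{L}}D^\alpha v$ always satisfies the Neumann condition $\frac{\partial}{\partial \mathbf{n}}\left(e^{s\mathcal{L}}D^\alpha v\right)=0$ on $\Gamma$, while $D^\alpha\left(e^{s\mathcal{L}}v\right)$ need not: on $\Omega=(0,\pi)$ with $v=\cos x$ one has $\partial_x\left(e^{s\Delta}v\right)=-e^{-s}\sin x$, whose derivative at $x=0$ equals $-e^{-s}\neq 0$, so it cannot coincide with $e^{s\Delta}(\partial_x v)$. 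To be fair, the paper's own proof disposes of the case $|\alpha|\ge 1$ with the same one-line assertion that ``$\mathcal{Q}$ and $D^\alpha$ commute,'' so your attempt reproduces the paper's reasoning including its weak point; but the justification you supply does not close that gap, and a correct treatment would have to work with operators that genuinely commute with the semigroup (powers of $\mathcal{L}$, combined with elliptic regularity to recover the $H^k$ norm) or estimate the boundary contributions directly.
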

\begin{proof}
    First, consider the case of $|\alpha|=0$. We define
    \begin{equation}
        \begin{aligned}
            \phi(z) \coloneqq e^z-1-z.
        \end{aligned}
    \end{equation}
    We note that $\phi(z)=z^2\varphi_2(z)$ as in \eqref{eq:exponential operator}.
    By Taylor expansions, we have 
    \begin{equation}
        \begin{aligned}
            \phi(z) & =\frac{z^2}{2} e^{\theta}\le \frac{z^2}{2}, \quad \forall z<0,
        \end{aligned}
    \end{equation}
    where $\theta\in(z,0)$. Let $(\mu_j,\varphi_j)_{j=1}^{\infty}$ be the eigenpairs of the selfadjoint and negative-definite operator $\mathcal{L}$. Here, $\varphi_j$ forms a complete orthogonal basis of $L^2(\Omega)$. Then we have
    \begin{equation}\label{eq:property1}
        \begin{aligned}
            \|\mathcal{Q}(v)\|_{L^2}^2 & =\left\|\phi\left(\frac{\tau}{2} \mathcal{L}\right) v\right\|_{L^2}^2=\sum_{j=1}^{\infty} \phi^2\left(\frac{\tau}{2} \mu_j\right) \left|\left(v, \varphi_j\right)\right|^2 \leq \sum_{j=1}^{\infty} \frac{\tau^4}{64} \mu_j^4 \left|\left(v, \varphi_j\right)\right|^2 \\
& =\frac{\tau^4}{64}\left\|\mathcal{L}^2 v\right\|_{L^2}^2.
        \end{aligned}
    \end{equation}
Next, we consider the case of $|\alpha|\ge 1$. Since $\mathcal{Q}$ and $D^\alpha$ commute, we can easily derive the following from \eqref{eq:property1}:
    \begin{equation} 
    \begin{aligned}
      \left\| D^{\alpha} (\mathcal{Q}(v)) \right\|_{L^2} =  \left\| \mathcal{Q}(D^{\alpha} v) \right\|_{L^2} \le \frac{\tau^2 }{8} \left\|  D^{\alpha} (\mathcal{L}^2 v)\right\|_{L^2}. 
    \end{aligned}
\end{equation}
\end{proof}

\begin{lemma} \label{estimate_M}
Let $g(s)=\int_0^s h(\xi)(s-\xi)^l \mathrm{d} \xi$, where $\|h(\xi)\|_{H^k} \leq C$, for all $\xi \in(0, \tau]$, and $l\ge 1$. Then there exists a constant $C_1$ depending on $(C,m,l)$, such that
\begin{equation}
    \begin{aligned}
        \left\|\int_0^\tau s^m g(s) d s\right\|_{H^k} \leq C_1 \tau^{m+l+2}.
    \end{aligned}
\end{equation}
\end{lemma}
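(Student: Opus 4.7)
The plan is to bound the iterated integral in $H^k$ by two successive applications of Minkowski's integral inequality (the integral form of the triangle inequality for the $H^k$-norm), which commutes the norm past the integral, and then to reduce everything to the evaluation of elementary polynomial integrals in $s$ and $\xi$.

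First I would estimate the inner object $g(s)$ in the $H^k$-norm. Since $g(s)=\int_0^s h(\xi)(s-\xi)^l\,\mathrm d\xi$ is a vector-valued Bochner-type integral with scalar weight $(s-\xi)^l$, Minkowski's inequality gives
\begin{equation*}
\|g(s)\|_{H^k}\;\le\;\int_0^s \|h(\xi)\|_{H^k}\,(s-\xi)^l\,\mathrm d\xi\;\le\;C\int_0^s (s-\xi)^l\,\mathrm d\xi\;=\;\frac{C\,s^{l+1}}{l+1},
\end{equation*}
where the hypothesis $\|h(\xi)\|_{H^k}\le C$ is used pointwise in $\xi$. This already encodes the first factor of $\tau^{l+1}$ that we expect (evaluated at $s=\tau$).

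Next I would apply Minkowski's inequality again to the outer integral $\int_0^\tau s^m g(s)\,\mathrm ds$, obtaining
\begin{equation*}
\left\|\int_0^\tau s^m g(s)\,\mathrm ds\right\|_{H^k}\le \int_0^\tau s^m\,\|g(s)\|_{H^k}\,\mathrm ds \le \frac{C}{l+1}\int_0^\tau s^{m+l+1}\,\mathrm ds=\frac{C\,\tau^{m+l+2}}{(l+1)(m+l+2)}.
\end{equation*}
Setting $C_1\coloneqq \frac{C}{(l+1)(m+l+2)}$, which depends only on $(C,m,l)$, closes the estimate.

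The argument is essentially routine and presents no real obstacle once one recognises that the $H^k$-norm may be pulled inside each integral sign; the only mild care needed is in justifying Minkowski's inequality for an $H^k$-valued integrand, which follows from the fact that $H^k(\Omega)$ is a Hilbert (hence Banach) space and $h(\xi)$ is assumed to have a uniformly bounded $H^k$-norm. No delicate interpolation or boundary arguments are required here, in contrast to the regularity proofs earlier in the paper.
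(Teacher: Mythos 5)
Your proposal is correct and follows essentially the same route as the paper: bound $\|g(s)\|_{H^k}$ by pulling the norm inside the inner integral to get $C s^{l+1}/(l+1)$, then pull the norm inside the outer integral to obtain $C\tau^{m+l+2}/((l+1)(m+l+2))$. The only difference is that you explicitly name and justify Minkowski's integral inequality for the vector-valued integrand, which the paper uses implicitly.
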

\begin{proof}
Direct calculation gives
\begin{equation}
\begin{aligned}
\|g(s)\|_{H^k}& =\left\|\int_0^s h(\xi)(s-\xi)^l\mathrm{d} \xi\right\|_{H^k} \leq \int_0^s \left\|h(\xi)\right\|_{H^k}(s-\xi)^l\mathrm{d} \xi \leq C\frac{s^{l+1}}{l+1} .
\end{aligned}
\end{equation}
Then we have
\begin{equation}
\begin{aligned}
\left\|\int_0 ^{\tau} s^m g(s) \ds\right\|_{H^k} &\le\int_0^\tau s^m \left\|g(s)\right\|_{H^k} \ds  \leq C\frac{\tau^{m+l+2}}{(l+1)(m+l+2)} .
\end{aligned}
\end{equation}
\end{proof}
\begin{lemma}  \label{convergence_1}
    Assume $v \in H^{k+6}(\Omega)$, $\|v\|_{L^{\infty}}\le 1$, and $0<\tau \le 1$. There exists a positive constant $C$, depending only on $(\Omega,d,T,\|v\|_{H^{k+6}},\varepsilon)$, such that
    \begin{equation}
    \begin{aligned}
        & \left\| \mathcal{S}(\tau)v -  \SL v+\tau f(v)+\frac{\tau^2}{2} \mathcal{L} f(v)+\frac{\tau^2}{2} f^{\prime}(v) \mathcal{L} v-\frac{\tau^2}{2} f(v) f^{\prime}(v) \right\|_{H^k} \le C \tau^3, 
    \end{aligned}
\end{equation}
where $\mathcal{S}(\tau)$ is the Strang splitting operator defined in \eqref{eq:S(tau)}.
\end{lemma}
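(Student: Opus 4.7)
The plan is to Taylor-expand each of the three factors in $\mathcal{S}(\tau)v=\SLt\SN\SLt v$ with explicit integral remainders, substitute, collect terms up to order $\tau^2$, and show every leftover is $O(\tau^3)$ in $H^k$. Since $e^{r\mathcal{L}}$ is an $H^k$-contraction (as used in Section~\ref{H^kstability}), a third-order Taylor expansion of $e^{\tau\mathcal{L}}$ gives $\SL v = v + \tau\mathcal{L}v + \tfrac{\tau^2}{2}\mathcal{L}^2 v + O_{H^k}(\tau^3)$ whenever $v\in H^{k+6}$, so that $\|\mathcal{L}^3 v\|_{H^k}\le\varepsilon^6\|v\|_{H^{k+6}}$ is finite. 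The claim therefore reduces to verifying
\begin{equation*}
\mathcal{S}(\tau)v = v + \tau\mathcal{L}v + \tfrac{\tau^2}{2}\mathcal{L}^2 v - \tau f(v) - \tfrac{\tau^2}{2}\mathcal{L}f(v) - \tfrac{\tau^2}{2}f'(v)\mathcal{L}v + \tfrac{\tau^2}{2}f(v)f'(v) + O_{H^k}(\tau^3).
\end{equation*}

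Setting $w_1\coloneqq\SLt v$, a third-order refinement of Lemma~\ref{propertyQ} (applied to the spectral function $e^z-1-z-z^2/2$ in the $\mathcal{L}$-eigenbasis) yields $w_1 = v + \tfrac{\tau}{2}\mathcal{L}v + \tfrac{\tau^2}{8}\mathcal{L}^2 v + \rho_1$ with $\|\rho_1\|_{H^k}\le C\tau^3\|v\|_{H^{k+6}}$. For the nonlinear flow, $u(s)\coloneqq\mathcal{S}_{\mathcal{N}}(s)w_1$ satisfies $\partial_t u = -f(u)$, whence $\partial_t^2 u = f'(u)f(u)$ and $\partial_t^3 u = -f''(u)f(u)^2 - f'(u)^2 f(u)$, and Taylor expansion with integral remainder gives
\begin{equation*}
\SN w_1 = w_1 - \tau f(w_1) + \tfrac{\tau^2}{2}f'(w_1)f(w_1) + \rho_2,\qquad \rho_2 = \int_0^\tau \tfrac{(\tau-s)^2}{2}\partial_t^3 u(s)\ds.
\end{equation*}
The maximum principle gives $\|u(s)\|_{L^\infty}\le 1$ and the ODE analogue \eqref{eq:regularity_ODE} of Theorem~\ref{lemma_derivative} bounds $\|u(s)\|_{H^{k+2}}$ uniformly on $[0,\tau]$; combined with the Sobolev algebra property of $H^{k+2}$ (valid for $d\le 3$), this produces $\|\rho_2\|_{H^k}\le C\tau^3$, with Lemma~\ref{estimate_M} kept in reserve for any double-integral residual that appears if one further expands $\partial_t^3 u(s)$ in powers of $s$. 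Finally, setting $w_2\coloneqq\SN w_1$ and applying the same third-order expansion to the outer $\SLt$, we obtain $\SLt w_2 = w_2 + \tfrac{\tau}{2}\mathcal{L}w_2 + \tfrac{\tau^2}{8}\mathcal{L}^2 w_2 + \rho_3$ with $\|\rho_3\|_{H^k}\le C\tau^3$, using that $w_2$ inherits enough Sobolev regularity from $w_1$ by the $H^k$-stability of $\mathcal{S}_{\mathcal{N}}$ established in Section~\ref{H^kstability}.

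It remains to substitute $w_1$ and $w_2$ and Taylor-expand $f(w_1)$ and $f'(w_1)f(w_1)$ about $v$:
\begin{equation*}
f(w_1) = f(v) + f'(v)(w_1-v) + r_f,\qquad f'(w_1)f(w_1) = f'(v)f(v) + r_{f'f},
\end{equation*}
with $\|r_f\|_{H^k} = O(\tau^2)$ and $\|r_{f'f}\|_{H^k} = O(\tau)$, controlled by the Sobolev algebra property, the embedding $H^2\hookrightarrow L^\infty$ for $d\le 3$, and the maximum principle. Since these enter the composed expansion multiplied by $\tau$ and $\tau^2$ respectively, they contribute only to the $O(\tau^3)$ residual. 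Collecting the surviving $O(1),O(\tau),O(\tau^2)$ pieces, the three $\mathcal{L}^2 v$ contributions combine as $\tfrac{\tau^2}{8}+\tfrac{\tau^2}{4}+\tfrac{\tau^2}{8}$, giving $\tfrac{\tau^2}{2}\mathcal{L}^2 v$, while the nonlinear cross terms reproduce exactly $-\tfrac{\tau^2}{2}\mathcal{L}f(v)$, $-\tfrac{\tau^2}{2}f'(v)\mathcal{L}v$ and $+\tfrac{\tau^2}{2}f(v)f'(v)$. The main obstacle is the bookkeeping of this collection together with the Sobolev-product estimates: composed terms such as $\mathcal{L}(f'(v)\mathcal{L}v)$ and $\mathcal{L}(f(v)f'(v))$ ultimately demand up to six spatial derivatives of $v$, which is precisely why the hypothesis is $v\in H^{k+6}$ rather than merely $H^{k+4}$.
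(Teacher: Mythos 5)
Your proposal is correct and follows essentially the same route as the paper's proof: Taylor-expand the nonlinear flow $\SN$ to second order with an integral remainder, expand the half-step heat propagators, control all remainders via the Sobolev algebra/multiplication property, the embedding $H^2\hookrightarrow L^\infty$, the maximum principle and the ODE regularity, and then collect terms. The only cosmetic difference is that the paper never Taylor-expands the purely linear part — it uses the semigroup identity $\mathcal{S}_{\mathcal{L}}(\tfrac{\tau}{2})\mathcal{S}_{\mathcal{L}}(\tfrac{\tau}{2})=\SL$ to keep $\SL v$ exact and only ever needs the second-order remainder $\mathcal{Q}=\SLt-(\mathcal{I}+\tfrac{\tau}{2}\mathcal{L})$ of Lemma~\ref{propertyQ} on terms already carrying a factor of $\tau$ — whereas you expand $e^{\tau\mathcal{L}}$ to third order and reassemble $\tfrac18+\tfrac14+\tfrac18=\tfrac12$ for the $\mathcal{L}^2v$ coefficient; both variants land on the same $H^{k+6}$ requirement.
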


\begin{proof}
From \eqref{eq:S(tau)}, we know
$\mathcal{S}(\tau) v=\mathcal{S}_{\mathcal{L}}\left(\frac{\tau}{2}\right)\mathcal{S}_{\mathcal{N}}\left(\tau\right)\mathcal{S}_{\mathcal{L}}\left(\frac{\tau}{2}\right)v$. 
Let us define $$w(0)\coloneqq \mathcal{S}_{\mathcal{L}}\left(\frac{\tau}{2}\right)v.$$ 
Here, we use $w(t)$ to denote $w(t,\cdot)$ as before.
The calculation of $\mathcal{S}(\tau) v$ consists of the following three steps. Firstly, we calculate the Taylor expansion of $\mathcal{S}_{\mathcal{N}}(\tau)w(0)$ in the time direction at $w(0)$. Next, we substitute $w(0)$ with $\mathcal{S}_{\mathcal{L}}\left(\frac{\tau}{2}\right)v$. Finally, we apply $\SLt$ on $\SN w(0)$. This completes the process of calculating $\mathcal{S}(\tau)v$.

\text{Step 1.} Recall that $w(\tau)=\SN w(0)$ satisfies the following equation
\begin{equation}
 \left\{
        \begin{aligned}
       &\partial_t w = -f(w), \quad 0 \le t \le \tau,\\
       &w|_{t=0} = w(0),
        \end{aligned}
        \right.
        \label{secondorder1}
\end{equation}
where $\left\|w(0)\right\|_{L^\infty}\le 1$. It is not difficult to check that
\begin{equation}
    \begin{aligned}
       \partial_{tt} w &= f'(w)f(w),\\
       \partial_{ttt} w &= -f''(w)f^2(w)-(f'(w))^2f(w). 
    \end{aligned}
\end{equation}
By Taylor expansions, we have 
\begin{equation} \label{eq:w(tau)}
    \begin{aligned}
        w(\tau) =& w(0) - \tau f(w(0))+\frac{\tau^2}{2} f'(w(0))f(w(0))+\mathcal{R}_1,
    \end{aligned}
\end{equation}
where 
\begin{equation}
    \begin{aligned}
        \mathcal{R}_1&=\frac{1}{2}\int_0^{\tau}\partial_{ttt}w(s)(\tau-s)^2 \ds \\
        & = \frac{1}{2}\int_0^{\tau} \left(-6w(s)(w^3(s)-w(s))^2-(3w^2(s)-1)^2(w^3(s)-w(s))\right)(\tau-s)^2\ds.
    \end{aligned}
\end{equation}

\text{Step 2.} Replacing $w(0)$ with $\SLt v$ in \eqref{eq:w(tau)}, we have
\begin{equation}
    \begin{aligned}
        \SN \SLt v =& \SLt v - \tau f\left(\SLt v\right)+\frac{\tau^2}{2} f'\left(\SLt v\right)f\left(\SLt v\right)+\mathcal{R}_1.
    \end{aligned}
\end{equation}

\text{Step 3.} Acting $\mathcal{S}_{\mathcal{L}}(\frac{\tau}{2})$ on both sides of the equation above, we obtain
\begin{equation} \label{eq:u}
    \begin{aligned}
        \mathcal{S}(\tau)v =&\SLt w(\tau)\\
        =&\SL v - \tau\SLt f\left(\SLt  v\right) + \frac{\tau^2}{2}\SLt \left(f'\left(\SLt v\right)f\left(\SLt v\right)\right)\\
        &+ \SLt \mathcal{R}_1.
    \end{aligned}
\end{equation}
According to the regularity result of   \eqref{eq:ODE}, Lemma \ref{estimate_M} and the Sobolev embedding theorem
\cite[Section 5.6.3, Theorem 6]{evans_partial_2022} for $d\leq 3$:
\begin{equation} \label{embedding_thm}
    \begin{aligned}
    H^2(\Omega) \hookrightarrow  L^{\infty}(\Omega),
    \end{aligned}
\end{equation} 
we have
\begin{equation}
    \begin{aligned} \label{eq:eta}
       &\left\|\SLt \mathcal{R}_1\right\|_{H^k}\\
       \le & \left\|\mathcal{R}_1\right\|_{H^k}\\ \le& \frac{\tau^3}{6}\|-6w(s)(w^3(s)-w(s))^2-(3w^2(s)-1)^2(w^3(s)-w(s))\|_{H^k}\\
      \le & \frac{\tau^3}{6}\left(\sum_{|\alpha| \le k}\|D^{\alpha}\left(-6w(s)(w^3(s)-w(s))^2-(3w^2(s)-1)^2(w^3(s)-w(s))\right)\|_{L^2}^2\right)^{\frac{1}{2}}\\
       \le& \frac{\tau^3}{6}\left(\sum_{|\alpha| \le k}|\Omega|^2\|D^{\alpha}\left(-6w(s)(w^3(s)-w(s))^2-(3w^2(s)-1)^2(w^3(s)-w(s))\right)\|_{L^\infty}^2\right)^{\frac{1}{2}}\\
      \le&  C_1\tau^3,
    \end{aligned}
\end{equation}
where $C_1$ depends on $(\Omega, d,T,\|v\|_{H^{k+2}})$.
 
Next, we proceed with a more detailed calculation on \eqref{eq:u} via two steps. We first calculate $f\left(\SLt v\right)$ and $f'\left(\SLt v\right)$ as follows
\begin{equation}
\begin{aligned} \label{eq:ut_n3}
        f\left(\SLt v\right)& =f\left(v+\frac{\tau}{2} \mathcal{L} v+\mathcal{Q}(v)\right) =f(v)+\frac{\tau}{2} f^{\prime}(v) \mathcal{L} v+\mathcal{R}_2 =f(v)+\mathcal{R}_3,\\
        f^{\prime}\left(\SLt v\right) & =f^{\prime}\left(v+\frac{\tau}{2} \mathcal{L} v+\mathcal{Q}(v)\right)  =f^{\prime}(v)+\mathcal{R}_4,
    \end{aligned}
\end{equation}    
where 
\begin{align*}
  \mathcal{R}_2=&  f^{\prime}(v) \mathcal{Q}(v)  +\int_v^{\SLt (v)}  \frac{\partial^2 f}{\partial u^2}\left(\SLt (v)-u\right) \mathrm{d} u
  \\=&f^{\prime}(v) \mathcal{Q}(v) +\int_v^{\SLt (v)} 6 u\left(\SLt (v)-u\right) \mathrm{d}u \nonumber\\
  =&f^{\prime}(v) \mathcal{Q}(v)+\frac{3}{4} \tau^2 v(\mathcal{L} v)^2+\frac{\tau^3}{8}(\mathcal{L} v)^3+3 \tau v \mathcal{L} v \mathcal{Q}(v)+3 v(\mathcal{Q}(v))^2\\
  &+\frac{3}{4} \tau^2(\mathcal{L} v)^2 \mathcal{Q}(v)+\frac{3}{2} \tau \mathcal{L} v(\mathcal{Q}(v))^2 +(\mathcal{Q}(v))^3\\
  \mathcal{R}_3=&  \frac{\tau}{2} f^{\prime}(v) \mathcal{L} v+\mathcal{R}_2, \\
  \mathcal{R}_4=& \int_v^{\SLt (v)} \frac{\partial^2 f}{\partial u^2} \mathrm{d} u=3\left(\SLt (v)\right)^2-3v^2\\
  =&3\left(\tau v \mathcal{L} v+2 v \mathcal{Q} (v)+\frac{\tau^2}{4}(\mathcal{L} v)^2+\tau \mathcal{L} v \mathcal{Q} (v)+(\mathcal{Q} (v))^2\right). 
\end{align*}
In the case of $k=0$, 
according to Lemma \ref{propertyQ}, \eqref{embedding_thm} and the definition of $\mathcal R_2$, we have 
\begin{equation} \label{R1_infty}
    \begin{aligned}
\left\|\mathcal{R}_2\right\|_{L^{\infty}} \le & C_{2} \left(\|f^{\prime}(v) \|_{\infty}\|\mathcal{Q}(v)\|_{H^2}+\frac{3}{4} \tau^2\|\mathcal{L} v\|_{H^2}^2+\frac{\tau^3}{8}\|\mathcal{L} v\|_{H^2}^3+3 \tau\|\mathcal{L} v\|_{H^2}\|\mathcal{Q} (v)\|_{H^2}\right.\\
&\left. +3\|\mathcal{Q} (v)\|_{H^2}^2 +\frac{3}{4} \tau^2\|\mathcal{L} v\|_{H^2}^2\|\mathcal{Q} (v)\|_{H^2}+\frac{3}{2} \tau\|\mathcal{L} v\|_{H^2}\|\mathcal{Q} (v)\|_{H^2}^2+\|\mathcal{Q} (v)\|_{H^2}^3\right)\\
\le & C_3\tau^2,
    \end{aligned}
\end{equation}
where $C_2$ depends on $(\Omega,d)$ and $C_3$ depends on $(\Omega, d, T,\|v\|_{H^6},\varepsilon)$.
Similarly, we have 
\begin{equation}
\begin{aligned}\label{eq:R2R3_infty}
    \|\mathcal{R}_3\|_{L^\infty}\leq C_4 \tau \quad \mbox{and}\quad \|\mathcal{R}_4\|_{L^\infty}\leq C_5 \tau,
\end{aligned}
\end{equation}
with $C_4$ and $C_5$ depending on $(\Omega, d, T,\|v\|_{H^6},\varepsilon)$. 

In the case of $k=1$, using \eqref{embedding_thm} and following the same strategy as the case of $k=0$, we have
\begin{equation} \label{R2_H1}
    \begin{aligned}
        \|\mathcal{R}_2\|_{H^1} \le C_6\tau^2,\quad \|\mathcal{R}_3\|_{H^1}\leq C_7 \tau \quad \mbox{and}\quad \|\mathcal{R}_4\|_{H^1}\leq C_8 \tau,
    \end{aligned}
\end{equation}
where $C_6$, $C_7$ and $C_8$ depend on $(\Omega, d, T,\|v\|_{H^7},\varepsilon)$.

In the case of $k\ge2$, using the multiplication theorems in Sobolev spaces \cite[Section 7, Theorem 7.4]{behzadan2021multiplication}:
\begin{equation}
    \begin{aligned} \label{multiplication}
        \|hg\|_{H^k}\le C_{9} \|h\|_{H^k}\|g\|_{H^k},
    \end{aligned}
\end{equation}
where $C_9$ depends only on $(\Omega,d)$,
we have
\begin{equation} \label{R2_Hk}
    \begin{aligned}
      \left\|\mathcal{R}_2\right\|_{H^k} \le& C_{10}\left(\|f^{\prime}(v) \|_{H^k}\|\mathcal{Q}(v)\|_{H^k}+\frac{3}{4} \tau^2\|v\|_{H^k}\|\mathcal{L} v\|_{H^k}^2+3 \tau\|v\|_{H^k}\|\mathcal{L} v\|_{H^k}\|\mathcal{Q} (v)\|_{H^k}\right.\\
& \left.+\frac{\tau^3}{8}\|\mathcal{L} v\|_{H^k}^3+3\|v\|_{H^k}\|\mathcal{Q} (v)\|_{H^k}^2 +\frac{3}{4} \tau^2\|\mathcal{L} v\|_{H^k}^2\|\mathcal{Q} (v)\|_{H^k}+\frac{3}{2} \tau\|\mathcal{L} v\|_{H^k}\|\mathcal{Q} (v)\|_{H^k}^2\right.\\
&\Bigl.+\|\mathcal{Q} (v)\|_{H^k}^3\Bigr)
\le C_{11} \tau^2,
    \end{aligned}
\end{equation}
where $C_{10}$ depends on $(\Omega,d)$ and $C_{11}$ depends on $(\Omega, d, T,\|v\|_{H^{k+6}},\varepsilon)$.
Similarly, we have 
\begin{equation}
    \|\mathcal{R}_3\|_{H^k}\leq C_{12} \tau \quad \mbox{and}\quad \|\mathcal{R}_4\|_{H^k}\leq C_{13} \tau
\end{equation}
with $C_{12}$ and $C_{13}$ depending on $(\Omega, d, T,\|v\|_{H^{k+6}},\varepsilon)$.

Then, from \eqref{eq:sl_Q} and \eqref{eq:ut_n3}, we have
\begin{equation}\label{eq:ut_n4}
    \begin{aligned}
 \SLt f\left(\SLt v\right)=&\left(\mathcal{I}+\frac{\tau}{2} \mathcal{L}+\mathcal{Q}\right)\left(f(v)+\frac{\tau}{2} f^{\prime}(v) \mathcal{L} v+\mathcal{R}_2\right) \\
=&\left(\mathcal{I}+\frac{\tau}{2} \mathcal{L}\right) f(v)+\frac{\tau}{2} f^{\prime}(v) \mathcal{L} v+\mathcal{R}_5,\\  
\SLt\left(f\left(\SLt v\right) f^{\prime}\left(\SLt v\right)\right)=&\left(\mathcal{I}+\frac{\tau}{2} \mathcal{L}+\mathcal{Q}\right)\left(f(v) f^{\prime}(v)+f^{\prime}(v)\mathcal{R}_3 \right.\\
&\left.+f(v)\mathcal{R}_4+\mathcal{R}_3\mathcal{R}_4\right) \\
=&f(v) f^{\prime}(v)+\mathcal{R}_6,
    \end{aligned}
\end{equation}
where
\begin{align}
     \mathcal{R}_5= &\left(\mathcal{I}+\frac{\tau}{2} \mathcal{L}+\mathcal{Q}\right)\mathcal{R}_2+\left(\frac{\tau}{2} \mathcal{L}+\mathcal{Q}\right) \left(\frac{\tau}{2}f^{\prime}(v)\mathcal{L}v\right)+\mathcal{Q}\left(f(v)\right), \nonumber\\
   \mathcal{R}_6=& \left(\mathcal{I}+\frac{\tau}{2} \mathcal{L}+\mathcal{Q}\right)\left(f^{\prime}(v)\mathcal{R}_3+f(v)\mathcal{R}_4+\mathcal{R}_3\mathcal{R}_4\right) +\left(\frac{\tau}{2} \mathcal{L}+\mathcal{Q}\right)\left(f(v) f^{\prime}(v)\right).\nonumber
\end{align}
Similarly as above, according to Lemma \ref{propertyQ}, \eqref{R1_infty}, \eqref{eq:R2R3_infty} and the fact $\SLt =\mathcal{I}+\frac{\tau}{2} \mathcal{L}+\mathcal{Q}$, we have 
\begin{equation} \label{eq:R5R6}
    \begin{aligned}
        \|\mathcal R_5\|_{L^2}
        \leq & \left\|\SLt\right\|_{L^2}\|\mathcal{R}_2\|_{L^2}+\left\|\left(\frac{\tau}{2} \mathcal{L}+\mathcal{Q}\right) \left(\frac{\tau}{2}f^{\prime}(v)\mathcal{L}v\right)\right\|_{L^2} +\left\|\mathcal{Q}\left(f(v)\right)\right\|_{L^2}\\
        \le & \left\|\SLt\right\|_{L^2}\|\mathcal{R}_2\|_{L^2}+\left\|\frac{\tau^2}{4} \mathcal{L} \left(f^{\prime}(v)\mathcal{L}v\right)\right\|_{L^2}+\left\| \frac{\tau^3}{16}\mathcal{L}^2\left(f^{\prime}(v)\mathcal{L}v\right)\right\|_{L^2} \\
        &+\frac{\tau^2}{8}\left\|\mathcal{L}^2f(v)\right\|_{L^2}\le C_{14} \tau^2,\\
        \|\mathcal R_6\|_{L^2}
        \leq & \left\|\SLt\right\|_{L^2}\|f^{\prime}(v)\mathcal{R}_3 +f(v)\mathcal{R}_4+\mathcal{R}_3\mathcal{R}_4\|_{L^2}\\
        &+\left\|\frac{\tau}{2} \mathcal{L}\left(f(v) f^{\prime}(v)\right)\right\|_{L^2}+
        \left\|\frac{\tau^3}{16} \mathcal{L}^2\left(f(v) f^{\prime}(v)\right)\right\|_{L^2}
        \le C_{15} \tau, \\
    \end{aligned}
\end{equation}
where $C_{14}$ and $C_{15}$ depend on $(\Omega, d, T,\|v\|_{H^6},\varepsilon)$. Thus, we obtain
\begin{equation}\label{eq:tau1}
    \begin{aligned}
        \left\| -\tau \mathcal{R}_5 +\frac{\tau^2}{2} \mathcal{R}_6\right\|_{L^2}
        \le  C_{16} \tau^3,
    \end{aligned}
\end{equation}
where $C_{16}$ depends on $(\Omega, d, T,\|v\|_{H^6},\varepsilon)$. Using similar analysis to \eqref{R2_H1} and \eqref{R2_Hk}, we have \begin{equation}\label{eq:tau2}
    \begin{aligned}
        \left\| -\tau \mathcal{R}_5 +\frac{\tau^2}{2} \mathcal{R}_6\right\|_{H^k}
        \le  C_{17} \tau^3,
    \end{aligned}
\end{equation}
where $C_{17}$ depends on $(\Omega, d, T,\|v\|_{H^{k+6}},\varepsilon)$.

Combining \eqref{eq:u}, \eqref{eq:ut_n3} and \eqref{eq:ut_n4}, one can obtain
\begin{equation} \label{eq:u_zhankai}
    \begin{aligned}
        \mathcal{S}(\tau)v=&\SLt w(\tau)v\\  =&\SL v-\tau f(v)-\frac{\tau^2}{2} \mathcal{L} f(v)-\frac{\tau^2}{2} f^{\prime}(v) \mathcal{L} v+\frac{\tau^2}{2} f(v) f^{\prime}(v)\\&-\tau \mathcal{R}_5+\frac{\tau^2}{2} \mathcal{R}_6+\SLt \mathcal{R}_1(\tau).
\end{aligned}
\end{equation}
Combining \eqref{eq:eta}, \eqref{eq:tau1}, \eqref{eq:tau2} and \eqref{eq:u_zhankai}, we have
\begin{equation}
    \begin{aligned}
        & \left\| \mathcal{S}(\tau)v -  \SL v+\tau f(v)+\frac{\tau^2}{2} \mathcal{L} f(v)+\frac{\tau^2}{2} f^{\prime}(v) \mathcal{L} v-\frac{\tau^2}{2} f(v) f^{\prime}(v) \right\|_{H^k} \le C_{18} \tau^3, 
    \end{aligned}
\end{equation}
where $C_{18}$ depends on $(\Omega,d,T,\|v\|_{H^{k+6}},\varepsilon)$. 
\end{proof}

\begin{lemma} \label{convergence_2}
    Assume $u(t)$ is the exact solution of the following PDE:
    \begin{equation} \label{T(tau)}
    \left\{
           \begin{aligned}
          &\partial_t u = \mathcal{L} u -f(u), \quad 0 < t \le \tau,\\
          &u(0,x) = v,\\
           \end{aligned}
           \right.
\end{equation}
where $v \in H^{k+6}(\Omega)$, $\|v\|_{L^{\infty}}\le 1$, and $0<\tau \le 1$. Then there exists a positive constant $C$, depending only on $(\Omega,d,T,\|v\|_{H^{k+6}},\varepsilon)$, such that
    \begin{equation}
    \begin{aligned}
        & \left\| u(\tau) -   \SL v+\tau f(v)+\frac{\tau^2}{2} \mathcal{L} f(v)+\frac{\tau^2}{2} f^{\prime}(v) \mathcal{L} v-\frac{\tau^2}{2} f(v) f^{\prime}(v) \right\|_{H^k} \le C \tau^3.
    \end{aligned}
\end{equation}
\end{lemma}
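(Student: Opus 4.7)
The plan is to compare two Taylor expansions at time zero: one for the exact PDE solution $u(\tau)$ and one for the linear semigroup $\SL v$. Using the equation $\partial_t u=\mathcal{L}u-f(u)$, I would compute
\begin{align*}
 \partial_t u|_{t=0} &= \mathcal{L}v-f(v),\\
 \partial_{tt} u|_{t=0} &= \mathcal{L}^2 v-\mathcal{L}f(v)-f'(v)\mathcal{L}v+f'(v)f(v),
\end{align*}
by differentiating the PDE once more and evaluating at $t=0$. Then Taylor's theorem with integral remainder gives
$$
u(\tau)=v+\tau(\mathcal{L}v-f(v))+\frac{\tau^2}{2}\bigl(\mathcal{L}^2 v-\mathcal{L}f(v)-f'(v)\mathcal{L}v+f'(v)f(v)\bigr)+\mathcal{R}_7(\tau),
$$
where $\mathcal{R}_7(\tau)=\frac12\int_0^\tau \partial_{ttt}u(s)(\tau-s)^2\ds$.

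Next I would expand the semigroup. Since $\mathcal{L}$ is self-adjoint and negative semi-definite, the functional calculus (as already used in Lemma \ref{propertyQ}) yields
$$
\SL v= v+\tau\mathcal{L}v+\frac{\tau^2}{2}\mathcal{L}^2 v+\tau^3 \mathcal{L}^3\varphi_3(\tau \mathcal{L})v,
$$
with $\varphi_3$ bounded on $L^2$ (and commuting with $D^\alpha$) so that $\|\tau^3\mathcal{L}^3\varphi_3(\tau\mathcal{L})v\|_{H^k}\le C\tau^3\|v\|_{H^{k+6}}$. Substituting both expansions into the expression inside the norm, the terms $v$, $\tau \mathcal{L}v$, $\frac{\tau^2}{2}\mathcal{L}^2 v$, $-\tau f(v)$, $-\frac{\tau^2}{2}\mathcal{L}f(v)$, $-\frac{\tau^2}{2}f'(v)\mathcal{L}v$, and $+\frac{\tau^2}{2}f(v)f'(v)$ cancel exactly, leaving only the two remainders
$$
u(\tau)-\Bigl(\SL v-\tau f(v)-\tfrac{\tau^2}{2}\mathcal{L}f(v)-\tfrac{\tau^2}{2}f'(v)\mathcal{L}v+\tfrac{\tau^2}{2}f(v)f'(v)\Bigr)=\mathcal{R}_7(\tau)-\tau^3 \mathcal{L}^3\varphi_3(\tau\mathcal{L})v.
$$

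It remains to bound $\|\mathcal{R}_7(\tau)\|_{H^k}\le C\tau^3$. Differentiating the PDE a third time, $\partial_{ttt}u$ is a finite sum of products in $u$ involving at most $\mathcal{L}^3 u$ (i.e.\ six spatial derivatives); schematically, using $\partial_{tt}u=\mathcal{L}^2 u-\mathcal{L}f(u)-f'(u)\mathcal{L}u+f'(u)f(u)$ one gets
$$
\partial_{ttt}u=\mathcal{L}\partial_{tt}u-f''(u)(\partial_t u)^2-f'(u)\partial_{tt}u.
$$
Applying Theorem \ref{lemma_derivative} with index $k+6$ gives $\|u(s)\|_{H^{k+6}}\le e^{CT}\|v\|_{H^{k+6}}$ for $s\in[0,\tau]$, so each linear-in-$\mathcal{L}$ piece such as $\mathcal{L}^3 u$ is bounded in $H^k$ by $\varepsilon^6\|u\|_{H^{k+6}}$. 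The nonlinear products are then handled in $H^k$ exactly as in the proof of Lemma \ref{convergence_1}: for $k\ge 2$ by the multiplication theorem \eqref{multiplication} that makes $H^k$ an algebra on $\Omega\subset\mathbb{R}^d$ with $d\le 3$, and for $k=0,1$ by the Sobolev embedding $H^2\hookrightarrow L^\infty$ together with the maximum principle $\|u(s)\|_{L^\infty}\le 1$. Applying Lemma \ref{estimate_M} to $\mathcal R_7$ with $m=0$, $l=2$ then yields $\|\mathcal{R}_7(\tau)\|_{H^k}\le C\tau^3$ with $C$ depending on $(\Omega,d,T,\|v\|_{H^{k+6}},\varepsilon)$, and combining this with the semigroup remainder bound completes the proof.

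The main obstacle is the combinatorial-but-tedious verification that no surviving term exceeds the required $\tau^3$ order: a single miscalculated derivative would break the cancellation of the $\tau^2$-terms. I would therefore carry out the derivations of $\partial_{tt}u$ and the pieces of $\partial_{ttt}u$ very carefully, and then rely on the already-proven regularity theorem and the Sobolev multiplication estimates to finish the quantitative bound without further subtlety.
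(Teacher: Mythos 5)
Your argument is correct, and it reaches the conclusion by a genuinely different decomposition than the paper. The paper starts from Duhamel's formula, $u(\tau)=\mathcal{S}_{\mathcal{L}}(\tau)v-\int_0^\tau \mathcal{S}_{\mathcal{L}}(\tau-s)f(u(s))\ds$, expands $\mathcal{S}_{\mathcal{L}}(\tau-s)$ to first order in $\mathcal{L}$ with a quadratic remainder (as in Lemma \ref{propertyQ}), Taylor-expands $f(u(s))$ to first order in $s$, and integrates term by term; you instead Taylor-expand $u(\tau)$ itself in time to second order with integral remainder and match it against the spectral expansion $\SL v=v+\tau\mathcal{L}v+\frac{\tau^2}{2}\mathcal{L}^2v+\tau^3\mathcal{L}^3\varphi_3(\tau\mathcal{L})v$. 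Your cancellation of the $O(1)$, $O(\tau)$ and $O(\tau^2)$ terms checks out (the computation of $\partial_{tt}u|_{t=0}$ is correct), and both routes end up needing exactly the same ingredients for the remainders: six spatial derivatives of $u(s)$ controlled via Theorem \ref{lemma_derivative} with index $k+6$, the algebra property \eqref{multiplication} for $k\ge 2$, and $H^2\hookrightarrow L^\infty$ plus the maximum principle for $k=0,1$. Your version is arguably cleaner in that the $\mathcal{L}^2 v$ terms cancel explicitly rather than being absorbed into the semigroup, at the cost of introducing the extra semigroup remainder $\tau^3\mathcal{L}^3\varphi_3(\tau\mathcal{L})v$ (which is harmless since $|\varphi_3(z)|\le\frac16$ for $z\le 0$ and $\varphi_3(\tau\mathcal{L})$ commutes with $D^\alpha$, exactly as in Lemma \ref{propertyQ}); the paper's Duhamel route keeps $\mathcal{S}_{\mathcal{L}}(\tau)v$ intact and so never produces that term. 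One small slip: $\mathcal{R}_7$ has the form $g(\tau)=\int_0^\tau h(\xi)(\tau-\xi)^2\,\mathrm{d}\xi$, not $\int_0^\tau s^m g(s)\ds$, so Lemma \ref{estimate_M} with $m=0$, $l=2$ as stated would give $\tau^4$ for the wrong object; what you actually need is the intermediate bound $\|g(\tau)\|_{H^k}\le C\tau^{3}/3$ from the first display of that lemma's proof, which gives the claimed $\tau^3$ directly.
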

\begin{proof}
By the Duhamel's principle, we have
\begin{equation}
    \begin{aligned} \label{eq:duhamel}
    \begin{aligned}
u(\tau)= & \mathcal{S}_\mathcal{L}(\tau) v-\int_0^\tau \mathcal{S}_\mathcal{L}(\tau-s) f(u(s)) \ds \\
= & \mathcal{S}_\mathcal{L}(\tau)v-\int_0^\tau f(u(s)) \ds-\int_0^\tau(\tau-s) \mathcal{L} f(u(s)) \ds -\int_0^\tau \mathcal{R}_1(f(u(s))) \ds,
\end{aligned}
    \end{aligned}
\end{equation}
where 
\begin{equation}
    \begin{aligned}
        \mathcal{R}_1(f(u(s))&\coloneqq \left(\mathcal{S}_{\mathcal{L}}\left(\tau-s\right)-\left(\mathcal{I}+(\tau-s) \mathcal{L}\right)\right)f(u(s)).
    \end{aligned}
\end{equation}
Using similar strategy in Lemma \ref{propertyQ}, we have
\begin{equation} \label{eq:R}
    \begin{aligned}
        \|D^{\alpha}\left(\mathcal{R}_1(f(u(s))\right)\|_{L^2}&\le \frac{(\tau-s)^2}{2}\|D^{\alpha}\left(\mathcal{L}^2\left(f\left(u\left(s\right)\right)\right)\right)\|_{L^2},\quad \forall 0 \le s \le \tau.
    \end{aligned}
\end{equation}
Then by the Taylor expansion, we have
\begin{equation} \label{eq:f_taylor}
    \begin{aligned}
       f(u(s)) & =f(u(0))+\frac{\partial f(u)}{\partial s} \Big|_{s=0} s+\mathcal{R}_2(s) \\
& =f(v)+\frac{\partial f}{\partial u}\Big|_{u=v} \frac{\partial u}{\partial s}\Big|_{s=0} s+\mathcal{R}_2(s) \\
& =f(v)+s f^{\prime}(v)(\mathcal{L} v-f(v))+\mathcal{R}_2(s) ,
    \end{aligned}
\end{equation}
where 
\begin{equation} \label{eq:f_tt}
    \begin{aligned}
       \mathcal{R}_2(s)&= \int_0^s \frac{\partial^2 f(u(\xi))}{\partial \xi^2}(s-\xi) \mathrm{d} \xi\\
       &= \int_0^s\left(- \partial_{\xi\xi}u(\xi)+6 u(\xi)\left(\mathcal{L}(u(\xi))-f(u(\xi))\right)^2+3 u^2(\xi) \partial_{\xi\xi}u(\xi)\right)(s-\xi) \mathrm{d} \xi,
    \end{aligned}
\end{equation}
with 
\begin{align*}
        \partial_{\xi\xi}u(\xi)=\mathcal{L}^2u(\xi)-\mathcal{L}f(u(\xi))-f^{\prime}(u(\xi))\mathcal{L}u(\xi)+f^{\prime}(u(\xi))f(u(\xi)).
\end{align*}
Substituting \eqref{eq:f_taylor} into \eqref{eq:duhamel}, we have 
\begin{equation}
    \begin{aligned} \label{eq:int_f}
        \int_0^\tau f(u(s)) \ds=\tau f(v)+\frac{\tau^2}{2} f^{\prime}(v)(\mathcal{L} v-f(v))+\int_0^\tau \mathcal{R}_2(s) \ds,
    \end{aligned}
\end{equation}
and
\begin{equation}
    \begin{aligned} \label{eq:int_lf}
        \int_0^\tau(\tau-s) \mathcal{L} f(u(s)) \ds & =\int_0^\tau(\tau-s) \mathcal{L}\left(f(v)+s f^{\prime}(v)(\mathcal{L} v-f(v))+\mathcal{R}_2(s)\right) \ds \\
& =\frac{\tau^2}{2} \mathcal{L} f(v)+\frac{\tau^3}{6} \mathcal{L}\left(f^{\prime}(v)(\mathcal{L} v-f(v)\right)+\int_0^\tau(\tau-s) \mathcal{L} \mathcal{R}_2(s) \ds.
    \end{aligned}
\end{equation}
To estimate $\left\|\int_0^\tau \mathcal{R}_2(s) \ds\right\|_{H^k}$ and $\left\|\int_0^\tau(\tau-s) \mathcal{L} \mathcal{R}_2(s) \ds\right\|_{H^k}$, we only need to estimate $\|\frac{\partial^2 f(u(\xi))}{\partial \xi^2}\|_{H^k}$ and $\|\mathcal{L}\frac{\partial^2 f(u(\xi))}{\partial \xi^2}\|_{H^k}$ according to Lemma \ref{estimate_M}, where $\xi$ is a variable related to time. Therefore, from \eqref{eq:f_tt}, it is sufficient to estimate the following terms:
\begin{equation} \label{eq:terms}
    \begin{aligned}
     &\left\|\mathcal{L}^2u(\xi)-\mathcal{L}f(u(\xi))-f^{\prime}(u(\xi))\mathcal{L}u(\xi)+f^{\prime}(u(\xi))f(u(\xi))\right\|_{H^k} ,\\
        &\left\|u^2(\xi)\left(\mathcal{L}^2u(\xi)-\mathcal{L}f(u(\xi))-f^{\prime}(u(\xi))\mathcal{L}u(\xi)+f^{\prime}(u(\xi))f(u(\xi))\right)\right\|_{H^k} ,\\
        &\left\|u(\xi)\left(\mathcal{L}(u(\xi))-f(u(\xi))\right)^2\right\|_{H^k},\\
        &\left\|\mathcal{L}\left(\mathcal{L}^2u(\xi)-\mathcal{L}f(u(\xi))-f^{\prime}(u(\xi))\mathcal{L}u(\xi)+f^{\prime}(u(\xi))f(u(\xi))\right)\right\|_{H^k}, \\
        &\left\|\mathcal{L}\left(u^2(\xi)\left(\mathcal{L}^2u(\xi)-\mathcal{L}f(u(\xi))-f^{\prime}(u(\xi))\mathcal{L}u(\xi)+f^{\prime}(u(\xi))f(u(\xi))\right)\right)\right\|_{H^k} ,\\
        &\left\|\mathcal{L}\left(u(\xi)\left(\mathcal{L}(u(\xi))-f(u(\xi))\right)^2\right)\right\|_{H^k} .
    \end{aligned}
\end{equation}
Note that in the $\|\cdot\|_{H^k}$ norm, the highest order of differentiation with respect to spatial variable in these terms is $6$th order. The corresponding components are $\mathcal{L}^3 u(\xi)$ and $u^2(\xi) \mathcal{L}^3 u(\xi)$. The component $\mathcal{L}(u^2(\xi) \mathcal{L}^2 u(\xi))$ contains $5$th-order derivatives of $u$, while the remaining components involve derivatives of order up to $4$. 
According to the Sobolev embedding inequality $H^2\hookrightarrow L^\infty$, Sobolev multiplication inequality \eqref{multiplication} and Theorem \ref{lemma_derivative}, we can claim that all terms in \eqref{eq:terms} can be bounded by some constant depending on $(\Omega,d,T,\|v\|_{H^{k+6}},\varepsilon)$ (the detailed proof is too lengthy and we leave it to interested readers). Thus, there exists a constant $C_1>0$ depending only on $(\Omega,d,T,\|v\|_{H^{k+6}},\varepsilon)$, such that
\begin{equation} \label{eq:R2}
    \begin{aligned}
        \left\|\int_0^\tau \mathcal{R}_2(s) \ds+\int_0^\tau(\tau-s) \mathcal{L} \mathcal{R}_2(s) \ds\right\|_{H^k} \le C_1 \tau^3.
    \end{aligned}
\end{equation}
According to Theorem \ref{lemma_derivative} and \eqref{eq:R}, we have 
\begin{equation} \label{eq:estimate3}
\begin{aligned}
 \left\| \int_0^\tau \mathcal{R}_1(f(u(s))) \ds \right\|_{H^k}
\le & \int_0^\tau\left\|  \mathcal{R}_1(f(u(s))) \right\|_{H^k}\ds \\
\le & \int_0^\tau \frac{(\tau-s)^2}{2}\|\mathcal{L}^2\left(f\left(u\left(s\right)\right)\right)\|_{H^k} \ds \le C_2 \tau^3,
\end{aligned}
\end{equation}
where $C_2$ depends on $(\Omega,d,T,\|v\|_{H^{k+6}},\varepsilon)$.
Combining \eqref{eq:duhamel}, \eqref{eq:int_f}, \eqref{eq:int_lf}, \eqref{eq:R2} and \eqref{eq:estimate3}, we have the desired result
\begin{equation}
    \begin{aligned}
        & \left\| u(\tau) -   \SL v+\tau f(v)+\frac{\tau^2}{2} \mathcal{L} f(v)+\frac{\tau^2}{2} f^{\prime}(v) \mathcal{L} v-\frac{\tau^2}{2} f(v) f^{\prime}(v) \right\|_{H^k} \le C_3 \tau^3,
    \end{aligned}
\end{equation}
where $C_3$ only depends on $(\Omega,d,T,\|v\|_{H^{k+6}},\varepsilon)$.
\end{proof}

\begin{lemma} \label{Lemma_secondorder}
Assume $v \in H^{k+6}(\Omega)$, $\|v\|_{L^{\infty}}\le 1$, and $0<\tau \le 1$. There exists a constant $C$ only depending on $(\Omega,d,T,\|v\|_{H^{k+6}},\varepsilon)$, such that 
    \begin{equation}
        \left\| \mathcal{S}(\tau) v-\mathcal{T}(\tau)v\right\|_{H^k} \le C \tau^3,
    \end{equation}
    where $\mathcal{T}(\tau)$ is the exact solution operator to \eqref{T(tau)}.
\end{lemma}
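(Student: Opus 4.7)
The plan is to obtain the bound by combining Lemma \ref{convergence_1} and Lemma \ref{convergence_2} through the triangle inequality, since the two preceding lemmas were constructed precisely so that the Strang splitting operator $\mathcal{S}(\tau)v$ and the exact solution operator $\mathcal{T}(\tau)v$ are each compared against the same explicit second-order reference expression
\begin{equation*}
\mathcal{P}(\tau)v \coloneqq \SL v-\tau f(v)-\frac{\tau^2}{2}\mathcal{L} f(v)-\frac{\tau^2}{2}f'(v)\mathcal{L} v+\frac{\tau^2}{2}f(v)f'(v).
\end{equation*}
By Lemma \ref{convergence_1}, $\|\mathcal{S}(\tau)v-\mathcal{P}(\tau)v\|_{H^k}\le C_1\tau^3$, and by Lemma \ref{convergence_2} applied to the unique solution of \eqref{T(tau)} with initial datum $v$, $\|\mathcal{T}(\tau)v-\mathcal{P}(\tau)v\|_{H^k}\le C_2\tau^3$, with both constants depending only on $(\Omega,d,T,\|v\|_{H^{k+6}},\varepsilon)$.

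First, I would verify that the hypotheses of both lemmas are met under the assumptions of the present statement. The regularity $v\in H^{k+6}(\Omega)$, the bound $\|v\|_{L^\infty}\le 1$, and $0<\tau\le 1$ are precisely what Lemmas \ref{convergence_1} and \ref{convergence_2} require, so nothing further needs to be checked at the level of inputs. In particular, the same auxiliary second-order expansion $\mathcal{P}(\tau)v$ is used in both, so no re-expansion is needed.

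Next, I would conclude via the triangle inequality:
\begin{equation*}
\|\mathcal{S}(\tau)v-\mathcal{T}(\tau)v\|_{H^k}\le \|\mathcal{S}(\tau)v-\mathcal{P}(\tau)v\|_{H^k}+\|\mathcal{T}(\tau)v-\mathcal{P}(\tau)v\|_{H^k}\le (C_1+C_2)\tau^3,
\end{equation*}
and then set $C\coloneqq C_1+C_2$, which depends only on the prescribed parameters $(\Omega,d,T,\|v\|_{H^{k+6}},\varepsilon)$.

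There is no genuine obstacle in this lemma, since all the heavy lifting — namely, the delicate Taylor/Duhamel expansions, the estimation of the remainder terms $\mathcal{R}_i$, the use of the property of $\mathcal{Q}$ in Lemma \ref{propertyQ}, the integral estimate of Lemma \ref{estimate_M}, the Sobolev embedding $H^2\hookrightarrow L^\infty$ for $d\le 3$, and the Sobolev multiplication inequality — has already been carried out inside Lemmas \ref{convergence_1} and \ref{convergence_2}. If anything, the only point worth noting explicitly is that the constants $C_1$ and $C_2$ from the two lemmas genuinely depend on the \emph{same} norm $\|v\|_{H^{k+6}}$ (and not, for example, on $\|v\|_{H^{k+8}}$), so that the combined constant $C$ still enjoys the stated dependence; this is clear from inspecting the hypotheses in both lemmas.
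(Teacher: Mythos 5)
Your proposal is correct and follows exactly the paper's argument: the paper's proof is a one-liner stating that Lemmas \ref{convergence_1} and \ref{convergence_2} directly yield the conclusion, and you have simply made explicit the common reference expansion and the triangle inequality underlying that step. The additional remark about the two constants depending on the same norm $\|v\|_{H^{k+6}}$ is accurate and consistent with the hypotheses of both lemmas.
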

\begin{proof}
    Lemmas \ref{convergence_1} and \ref{convergence_2} can directly lead to the conclusion.
\end{proof}

\begin{lemma} \label{polynomial_Hk}
Assume $\|v_1^0\|_{L^{\infty}}\le 1$ and $\|v_2^0\|_{L^{\infty}} \le 1$.
In the cases of $k=0$ or $k\ge 2$,    for any $v_1^0\in {H^{k}}(\Omega)$, $v_2^0\in {H^{k}}(\Omega)$, we have
    \begin{equation}
    \begin{aligned}
        \left\|\SN v_1^0-\SN v_2^0\right\|_{H^{k}}\le e^{C\tau}\left\| v_1^0- v_2^0\right\|_{H^{k}},
    \end{aligned}
\end{equation}
where $C$ depends on $(\Omega,d,T,\|v_1^0\|_{H^{k}},\|v_2^0\|_{H^{k}})$.  In the case of $k=1$, for any $v_1^0\in {H^{2}}(\Omega)$, $v_2^0\in {H^{2}}(\Omega)$, we have
    \begin{equation}
    \begin{aligned}
        \left\|\SN v_1^0-\SN v_2^0\right\|_{H^{1}}\le e^{\tilde{C}\tau}\left\| v_1^0- v_2^0\right\|_{H^{1}},
    \end{aligned}
\end{equation}
where $\tilde{C}$ depends on $(\Omega,d,T,\|v_1^0\|_{H^{2}},\|v_2^0\|_{H^{2}})$.
\end{lemma}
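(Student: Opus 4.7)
The plan is to introduce $v_i(t) = \mathcal{S}_{\mathcal{N}}(t) v_i^0$ for $i = 1,2$, which satisfy the ODE $\partial_t v_i = v_i - v_i^3$ pointwise in $x$, and study their difference $w(t) \coloneqq v_1(t) - v_2(t)$. Factoring $v_1^3 - v_2^3 = w \cdot g$ with $g \coloneqq v_1^2 + v_1 v_2 + v_2^2$, the difference satisfies
\begin{equation*}
    \partial_t w = w - g\,w, \qquad w(0) = v_1^0 - v_2^0.
\end{equation*}
The crucial structural observation is that $g \geq 0$ everywhere, so that after applying any $D^{\alpha}$ and pairing with $D^{\alpha} w$, the leading contribution $-\langle g\, D^\alpha w, D^\alpha w\rangle \leq 0$ drops out, leaving only lower-order commutator-type terms to estimate. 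Throughout, the maximum principle gives $\|v_i(t)\|_{L^\infty} \leq 1$, and the $H^k$-regularity estimate \eqref{eq:regularity_ODE} from Section \ref{H^kstability} provides a uniform bound $\|v_i(t)\|_{H^m} \leq e^{C\tau} \|v_i^0\|_{H^m}$ on $[0,\tau]$ for every $m \leq k$ (or $m \leq 2$ in the $k=1$ case).

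For the base case $k=0$, pairing the ODE with $w$ in $L^2$ yields $\tfrac{1}{2}\partial_t \|w\|_{L^2}^2 \leq \|w\|_{L^2}^2$ and Gronwall's inequality gives the desired bound. For $k \geq 2$, I would apply $D^\alpha$ with $|\alpha| = k$, take the $L^2$ inner product with $D^\alpha w$, and observe that the only remaining nontrivial terms are those in $D^\alpha(gw) - g\, D^\alpha w$, which are finite sums of products $D^{\beta_1} v_j \, D^{\beta_2} v_\ell \, D^{\beta_3} w$ with $|\beta_3| \leq k-1$. Since $d \leq 3$ and $k \geq 2$ implies $H^k$ is a Banach algebra via the multiplication inequality \eqref{multiplication} (which the paper already uses), I obtain $\|D^\alpha(gw) - g\,D^\alpha w\|_{L^2} \leq C\|w\|_{H^{k-1}} \leq C\|w\|_{H^k}$, with $C$ depending on $(\Omega,d,T,\|v_1^0\|_{H^k},\|v_2^0\|_{H^k})$. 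Summing over $|\alpha| \leq k$, combining with lower-order estimates obtained by induction, and applying Gronwall's inequality finishes this case.

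The main obstacle is the case $k=1$, because in dimension $d=3$, $H^1$ is \emph{not} a Banach algebra, and bounding the cross term $\langle w\,\partial_i g, \partial_i w\rangle$ requires more care. Here I would estimate
\begin{equation*}
    |\langle w\, \partial_i g, \partial_i w\rangle| \leq \|\partial_i g\|_{L^4}\,\|w\|_{L^4}\,\|\partial_i w\|_{L^2},
\end{equation*}
using the Sobolev embedding $H^1(\Omega) \hookrightarrow L^4(\Omega)$ (valid for $d\leq 3$) to get $\|w\|_{L^4} \leq C\|w\|_{H^1}$. Since $\partial_i g$ is a sum of terms $v_j \partial_i v_\ell$, the Gagliardo--Nirenberg inequality \eqref{eq:Gagliardo_Nirenberg} together with the maximum principle $\|v_j\|_{L^\infty} \leq 1$ gives $\|\partial_i v_\ell\|_{L^4} \leq C\|v_\ell\|_{H^2}^{1/2}$, so $\|\partial_i g\|_{L^4}$ is controlled by $\|v_j\|_{H^2}$. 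This is precisely where the hypothesis $v_i^0 \in H^2$ is used: by \eqref{eq:regularity_ODE}, $\|v_j(t)\|_{H^2}$ is bounded uniformly on $[0,\tau]$ in terms of $\|v_j^0\|_{H^2}$. Collecting estimates and summing over $i$ gives
\begin{equation*}
    \tfrac{1}{2}\partial_t \|w\|_{H^1}^2 \leq \tilde{C}\,\|w\|_{H^1}^2,
\end{equation*}
and Gronwall's inequality delivers the claimed $H^1$ bound with $\tilde C$ depending on $(\Omega,d,T,\|v_1^0\|_{H^2},\|v_2^0\|_{H^2})$.
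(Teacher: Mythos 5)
Your proposal is correct and follows essentially the same route as the paper: form the difference equation for $w=v_1-v_2$, apply $D^\alpha$, take the $L^2$ pairing with $D^\alpha w$, control the cubic term via the maximum principle together with the multiplication theorem \eqref{multiplication} for $k\ge 2$ (and via $L^4$ duality with the $H^2$ bound on $v_i$ for $k=1$, which is exactly why the paper also assumes $v_i^0\in H^2$ there), and close with Gronwall. Your extra observation that $g=v_1^2+v_1v_2+v_2^2\ge 0$ lets the leading term drop with a sign is a mild refinement the paper does not need — it simply bounds $\|g\|_{L^\infty}\le 3$ — and does not change the argument.
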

\begin{proof}
Suppose $v_i~(i=1,2)$ is the exact solution of the following equation:
\begin{equation} \label{eq:u-v}
\left\{\begin{array}{l}
\partial_t v_i=v_i-v_i^3,\quad t\in(0,\tau] \\
v_i(0)=v_i^0.
\end{array}\right.
\end{equation}
Acting $D^{\alpha}$ on both sides of \eqref{eq:u-v} for $i=1,2$, we have
\begin{equation} \label{eq:D(u-v)}
    \begin{aligned}
       \partial_t\left(D^\alpha\left(v_1-v_2\right)\right)=D^\alpha\left(v_1-v_2\right)-D^\alpha v_1^3+D^\alpha v_2^3.
    \end{aligned}
\end{equation}
Taking inner product with $D^\alpha\left(v_1-v_2\right)$ in \eqref{eq:D(u-v)}, we have
\begin{equation} \label{eq:v1-v2}
\begin{aligned}
&\frac{1}{2} \partial_t\left\|D^\alpha\left(v_1-v_2\right)\right\|_{L^2}^2  =\left\|D^\alpha\left(v_1-v_2\right)\right\|_{L^2}^2-\left\langle D^\alpha (v_1^3-v_2^3), D^\alpha\left(v_1-v_2\right)\right\rangle\\
 \leq&\left\|D^\alpha\left(v_1-v_2\right)\right\|_{L^2}^2+\left\|D^\alpha (v_1^3-v_2^3)\right\|_{L^2}\left\|D^\alpha\left(v_1-v_2\right)\right\|_{L^2} \\
 \leq&\left\|D^\alpha\left(v_1-v_2\right)\right\|_{L^2}^2+\left\|D^\alpha \left((v_1-v_2)(v_1^2+v_2^2+v_1v_2)\right)\right\|_{L^2}\left\|v_1-v_2\right\|_{H^k}.
\end{aligned}
\end{equation}

In the case of $k=0$, using the maximum principle, we have
\begin{equation}
    \begin{aligned} \label{eq:v1-v2(L2)}
       &\frac{1}{2} \partial_t\left\|v_1-v_2\right\|_{L^2}^2 \le 4\left\|v_1-v_2\right\|_{L^2}^2,
    \end{aligned}
\end{equation}
which leads to 
\begin{equation}
    \begin{aligned}
        \left\|\SN v_1^0-\SN v_2^0\right\|_{L^2}\le e^{4\tau}\left\| v_1^0- v_2^0\right\|_{L^2}.
    \end{aligned}
\end{equation}

In the case of $k=1$, we only need to consider $|\alpha|=1$. Using the maximum principle and  Sobolev embedding inequality for $d\le 3$:
\begin{equation}
W^{1, 2}(\Omega) \times W^{1, 2}(\Omega) \hookrightarrow W^{0, 2}(\Omega),
\end{equation}
we have
\begin{equation}
    \begin{aligned}
        &\left\|D^\alpha \left((v_1-v_2)(v_1^2+v_2^2+v_1v_2)\right)\right\|_{L^2} \\\le & 3\left\|D^\alpha \left(v_1-v_2\right)\right\|_{L^2}+\left\| (v_1-v_2)(2v_1D^\alpha v_1+2v_2D^\alpha v_2+v_2D^\alpha v_1+v_1D^\alpha v_2)\right\|_{L^2}\\
        \le & C_1\left\|v_1-v_2\right\|_{H^1},
    \end{aligned}
\end{equation}
where $C_1$ is a constant depending on $(\Omega,d,T,\|v_1^0\|_{H^2},\|v_2^0\|_{H^2})$ from the regularity analysis of \eqref{eq:u-v} as aforementioned. Therefore, from \eqref{eq:v1-v2}, we have
\begin{equation} 
\begin{aligned}
&\frac{1}{2} \partial_t\left\|D^\alpha\left(v_1-v_2\right)\right\|_{L^2}^2  
\le \left\|D^\alpha\left(v_1-v_2\right)\right\|_{L^2}^2+C_1\left\|v_1-v_2\right\|_{H^1}^2.
\end{aligned}
\end{equation}
Combining it with \eqref{eq:v1-v2(L2)}, we have
\begin{equation}
    \begin{aligned} \label{eq:v1-v2(H1)}
      &\frac{1}{2} \partial_t\left\|v_1-v_2\right\|_{H^1}^2  
\le C_2\left\|v_1-v_2\right\|_{H^1}^2, 
    \end{aligned}
\end{equation}
where $C_2$ is a constant depending on $(\Omega,d,T,\|v_1^0\|_{H^2},\|v_2^0\|_{H^2})$. Then, we can conclude that
\begin{equation}
    \begin{aligned}
        \left\|\SN v_1^0-\SN v_2^0\right\|_{H^{1}}\le e^{C_2\tau}\left\| v_1^0- v_2^0\right\|_{H^{1}}.
    \end{aligned}
\end{equation}

In the case of $k\ge 2$, using the multiplication theorem \eqref{multiplication} and the regularity analysis of the solution to \eqref{eq:u-v}, we have
\begin{equation}
    \begin{aligned}
        &\left\|D^\alpha \left((v_1-v_2)(v_1^2+v_2^2+v_1v_2)\right)\right\|_{L^2} \\
        \le &\left\|(v_1-v_2)(v_1^2+v_2^2+v_1v_2)\right\|_{H^k} \\
        \le
        & C_3\left\|v_1-v_2\right\|_{H^{k}}\left(\left\|v_1\right\|_{H^{k}}^2+\left\|v_2\right\|_{H^{k}}^2+\left\|v_1\right\|_{H^{k}}\left\|v_2\right\|_{H^{k}}\right)\\
        \le & C_4\left\|v_1-v_2\right\|_{H^{k}},
    \end{aligned}
\end{equation}
where $C_3$ depends on $(\Omega,d)$ and $C_4$ depends on $(\Omega,d,T,\|v_1^0\|_{H^{k}},\|v_2^0\|_{H^{k}})$. From \eqref{eq:v1-v2}, we have
\begin{equation} 
\begin{aligned} \label{eq:v1-v2(Hk)}
&\frac{1}{2} \partial_t\left\|D^\alpha\left(v_1-v_2\right)\right\|_{L^2}^2  
\le \left\|D^\alpha\left(v_1-v_2\right)\right\|_{L^2}^2+C_4\left\|v_1-v_2\right\|_{H^{k}}^2.
\end{aligned}
\end{equation}
Summing up \eqref{eq:v1-v2(Hk)} for $|\alpha|\le k$, we have
\begin{equation}
    \begin{aligned}
      &\frac{1}{2} \partial_t\left\|v_1-v_2\right\|_{H^{k}}^2  
\le C_5\left\|v_1-v_2\right\|_{H^{k}}^2, 
    \end{aligned}
\end{equation}
where $C_5$ is a constant depending on $(\Omega,d,T,\|v_1^0\|_{H^{k}},\|v_2^0\|_{H^{k}})$. Therefore, we have
\begin{equation}
    \begin{aligned}
        \left\|\SN v_1^0-\SN v_2^0\right\|_{H^{k}}\le e^{C_5\tau}\left\| v_1^0- v_2^0\right\|_{H^{k}}.
    \end{aligned}
\end{equation}
\end{proof}

Now we are ready to give the $H^k$-norm error estimate for the Strang splitting method as follows.
\begin{theorem} \label{theorem_tau}
    Assume $u^0 \in H^{k+6}(\Omega)$, $\|u^0\|_{L^{\infty}}\le 1$, and $\tau_{\max}\le 1$, where $\tau_{\max}$ is defined in \eqref{eq:tau_max}. There exists a constant $C$, depending on $(\Omega,d,T,\|u^0\|_{H^{k+6}},\varepsilon)$, such that
    \begin{align}
        \sup _{\tau_1 +\tau_2+\cdots \tau_n \leq T}\left\|u^n-u(t_n)\right\|_{H^k} \leq C \tau_{\max}^2.
    \end{align}
\end{theorem}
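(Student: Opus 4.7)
The plan is to derive a standard one-step error recursion and close it by a discrete Gronwall argument. Writing $u^n = \mathcal{S}(\tau_n)u^{n-1}$ and $u(t_n) = \mathcal{T}(\tau_n)u(t_{n-1})$, I would decompose
\begin{equation*}
u^n - u(t_n) = \bigl(\mathcal{S}(\tau_n)u^{n-1} - \mathcal{S}(\tau_n)u(t_{n-1})\bigr) + \bigl(\mathcal{S}(\tau_n)u(t_{n-1}) - \mathcal{T}(\tau_n)u(t_{n-1})\bigr),
\end{equation*}
so that the step-$n$ error splits into the propagation of the previous error through the numerical flow and the local consistency error at the exact value $u(t_{n-1})$.

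For the local term, Lemma \ref{Lemma_secondorder} applied with $v = u(t_{n-1})$ yields an $H^k$ bound of order $C\tau_n^3$ with $C$ depending only on $(\Omega,d,T,\|u^0\|_{H^{k+6}},\varepsilon)$: by Theorem \ref{lemma_derivative} (at regularity level $k+6$) the quantity $\|u(t_{n-1})\|_{H^{k+6}}$ is controlled uniformly in $n$, and the exact-solution maximum principle gives $\|u(t_{n-1})\|_{L^\infty}\le 1$, so the hypotheses of the lemma are met uniformly.

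For the propagation term, I would factor $\mathcal{S}(\tau_n) = \mathcal{S}_{\mathcal{L}}(\tau_n/2)\,\mathcal{S}_{\mathcal{N}}(\tau_n)\,\mathcal{S}_{\mathcal{L}}(\tau_n/2)$ and use the $H^k$ contractivity of $\mathcal{S}_{\mathcal{L}}$ to peel off the outer and inner linear factors, reducing matters to $\|\mathcal{S}_{\mathcal{N}}(\tau_n)\tilde v_1 - \mathcal{S}_{\mathcal{N}}(\tau_n)\tilde v_2\|_{H^k}$ with $\tilde v_i \coloneqq \mathcal{S}_{\mathcal{L}}(\tau_n/2)\cdot$ (respectively $u^{n-1}$ and $u(t_{n-1})$). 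Both $\tilde v_i$ lie in the unit ball of $L^\infty$ (by the numerical maximum principle of Section \ref{section_MBP}, the exact-solution maximum principle, and the $L^\infty$ contractivity of $\mathcal{S}_{\mathcal{L}}$) and in a fixed bounded subset of $H^k$ (by the stability result of Section \ref{H^kstability} combined with Theorem \ref{lemma_derivative} and the $H^k$ contractivity of $\mathcal{S}_{\mathcal{L}}$). The extra $H^2$ input required in the $k=1$ case of Lemma \ref{polynomial_Hk} is automatic from $u^0\in H^{k+6}\subset H^7$. Lemma \ref{polynomial_Hk} then supplies the bound $e^{C\tau_n}\|u^{n-1}-u(t_{n-1})\|_{H^k}$.

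Setting $e^n \coloneqq \|u^n - u(t_n)\|_{H^k}$, the two estimates combine to give $e^n \le e^{C\tau_n}\,e^{n-1} + C\tau_n^3$. Iterating from $e^0 = 0$, bounding $\prod_{k=j+1}^n e^{C\tau_k} \le e^{CT}$, and using $\tau_j^3 \le \tau_{\max}^2\,\tau_j$ together with $\sum_j \tau_j \le T$ yields
\begin{equation*}
e^n \;\le\; \sum_{j=1}^n e^{C(t_n - t_j)}\,C\tau_j^3 \;\le\; C\,e^{CT}\,T\,\tau_{\max}^2,
\end{equation*}
which is the stated bound. The main obstacle I anticipate is not the Gronwall step itself but the bookkeeping needed to freeze the constants of Lemmas \ref{Lemma_secondorder} and \ref{polynomial_Hk} uniformly in $n$; this forces a simultaneous deployment of the uniform $H^{k+6}$ regularity of the exact solution, the uniform $H^k$ stability of the numerical solution, and the two maximum principles.
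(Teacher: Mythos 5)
Your proposal is correct and follows essentially the same route as the paper's proof: the same triangle-inequality decomposition into a propagation term and a local consistency term, the same use of Lemma \ref{Lemma_secondorder} (with Theorem \ref{lemma_derivative} to control $\|u(t_{n-1})\|_{H^{k+6}}$) for the local error, the same peeling of the linear propagators to invoke Lemma \ref{polynomial_Hk}, and the same discrete Gronwall iteration with $\tau_j^3\le\tau_{\max}^2\tau_j$ and $\sum_j\tau_j\le T$.
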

\begin{proof}
By the triangle inequality, the $H^k$-norm error between the numerical solution and the exact solution is 
\begin{align} \label{eq:triangle}
    \left\|u^{n+1} - u(t_{n+1})\right\|_{H^k} \le \left\|\mathcal{S}(\tau_{n+1})u^n-\mathcal{S}(\tau_{n+1})u(t_n)\right\|_{H^k}+\left\| \mathcal{S}(\tau_{n+1})u(t_n)-u(t_{n+1})\right\|_{H^k}.
\end{align}
Using Lemma \ref{lemma_derivative} and  \ref{Lemma_secondorder}, we obtain
\begin{equation}
    \begin{aligned}
       \left\| \mathcal{S}(\tau_{n+1})u(t_n)-u(t_{n+1})\right\|_{H^k} \le C_1 \tau_{n+1}^3 , \label{eq:convergence1}
    \end{aligned}
\end{equation}
where $C_1$ depends on $(\Omega,d,T,\|u^0\|_{H^{k+6}},\varepsilon)$.
Next, from Lemma \ref{polynomial_Hk}, we have the following estimate
\begin{equation}
    \begin{aligned}
        &\left\| \mathcal{S}(\tau_{n+1})u^n-\mathcal{S}(\tau_{n+1})u(t_n)\right\|_{H^k} \le \left\| \mathcal{S}_{\mathcal{N}}(\tau_{n+1})\SLt u^n-\mathcal{S}_{\mathcal{N}}(\tau_{n+1})\SLt u(t_n)\right\|_{H^k}\\
        \le& e^{ C_2\tau_{n+1}}\left\|\SLt u^n - \SLt u(t_n)\right\|_{H^k}\le e^{ C_2\tau_{n+1}}\left\|u^n - u(t_n)\right\|_{H^k},\label{eq:convergence2}
    \end{aligned}
\end{equation}
where $C_2$ depends on $(\Omega,d,T,\|u^0\|_{H^{k+6}},\varepsilon)$.
Combining \eqref{eq:convergence1} and \eqref{eq:convergence2}, we derive
    \begin{equation}
        \begin{aligned}
            \left\|u^{n} - u(t_{n})\right\|_{H^k} &\le e^{C_2\tau_{n}}\left\|u^{n-1} - u(t_{n-1})\right\|_{H^k} + C_1\tau_{n}^3\\
            &\le e^{C_2\tau_{n}}(e^{C_2\tau_{n-1}}\left\|u^{n-2} - u(t_{n-2})\right\|_{H^k}+C_1\tau_{n-1}^3) + C_1\tau_{n}^3\\
            &= e^{C_2(\tau_{n-1}+\tau_{n})}\left\|u^{n-2} - u(t_{n-2})\right\|_{H^k} + C_1(\tau_{n}^3+e^{C_2\tau_{n}}\tau_{n-1}^3)\\
            & \cdots\\
            &\le e^{C_2(\tau_{n}+\cdots+\tau_1)}\left\|u^{0} - u(t_0)\right\|_{H^k} + C_1(\tau_{n}^3+e^{C_2\tau_{n}}\tau_{n-1}^3+\cdots+e^{C_2(\tau_{n}+\cdots+\tau_2)}\tau_{1}^3)\\
            &\le C_1T e^{C_2T} \tau_{\max}^2,
        \end{aligned} 
    \end{equation}
    which provides the desired error estimate 
    \begin{align}
        \sup _{\tau_1 +\tau_2+\cdots \tau_n \leq T}\left\|u^n-u(t_n)\right\|_{H^k} \le C \tau_{\max}^2,
    \end{align}
    where $C$ depends on $(\Omega,d,T,\|u^0\|_{H^{k+6}},\varepsilon)$.
\end{proof}
\color{black}
\begin{remark}
    In \cite{li_stability_2022-1}, $u^0 \in H^{40}(\Omega)$ is required to prove the second-order approximation.
    However, we select a different triangle inequality \eqref{eq:triangle} in this article such that $u^0 \in H^{6}(\Omega)$ is sufficient for the second-order approximation. This is a relaxation for the initial conditions. 
\end{remark}

\begin{remark}
Consider the Strang splitting method with variable time steps for the Allen--Cahn equation with logarithmic potential and homogeneous Neumann boundary condition. In this case, there are two challenges compared to the polynomial case. Firstly, the nonlinear solution operator $\mathcal{S}_{\mathcal{N}}$ can not be given explicitly. One feasible way is to approximate $\mathcal{S}_{\mathcal{N}}$ by Runge--Kutta method, which, however, might cause problems in proving the maximum principle and energy stability. Secondly, the Lipschitz constant of the nonlinearity is large near the maximum bound, which might lead to a large error estimate coefficient. We will focus on these issues in future works.
\end{remark}

\begin{remark}
    The $H^k$-norm error analysis for the case of homogeneous Neumann boundary conditions can be directly extended to the case of periodic boundary conditions. The above theorems also hold for periodic boundary conditions, the case of which is even easier to prove.
\end{remark}

\section{Numerical experiments}
    \label{simulations}
    In this section, we show the numerical results of the Strang splitting method on the domain $\Omega = [0,2\pi]^2$. When solving the linear solution operator $S_{\mathcal{L}}(\tau)$, we use the pseudo-spectral method for space discretization \cite{ju2015fast}. 
    Note that the efficiency of operator splitting could be enhanced by employing variable time steps, without compromising the accuracy of the results. For the Hamiltonian systems, performing some time variable transformations and then applying a constant step size implementation to the transformed system could be more efficient \cite{blanes2005adaptive,Hairer2006,blanes2012explicit}. For the dissipative systems, in  \cite{qiao_adaptive_2011}, Qiao, Zhang and Tang propose two adaptive strategies based on the energy variation and the solution roughness respectively, which can be applied to the operator splitting method for molecular beam epitaxy model \cite{cheng2015fast}. In \cite{gomez2011provably}, Gomez and Hughes develop another adaptive strategy, where they adjust the time step based on whether the computed error is within the tolerance range. 
 These adaptive strategies can be applied to solve the Allen--Cahn equation \cite{hou2023linear,shen2016maximum}, in particular the operator splitting method \cite{huang2019adaptive}. The efficiency of adaptive strategy is presented in the following. 

\begin{exmp}\label{accuracy_test}
    Consider the Allen--Cahn equation with the polynomial potential $f(u)=u^3-u$, where $\varepsilon = 0.1$. We take
the initial condition consisting of seven circles:
  \begin{equation}
    u_0(x, y)=-1+\sum_{i=1}^7 f_0\left(\sqrt{\left(x-x_i\right)^2+\left(y-y_i\right)^2}-r_i\right),
  \end{equation}
  where the centers and radii are given by
  \begin{equation}
    \begin{array}{l|lllllll}
    \hline i & 1 & 2 & 3 & 4 & 5 & 6 & 7 \\
    \hline x_i & \pi / 2 & \pi / 4 & \pi / 2 & \pi & 3 \pi / 2 & \pi & 3 \pi / 2 \\
    y_i & \pi / 2 & 3 \pi / 4 & 5 \pi / 4 & \pi / 4 & \pi / 4 & \pi & 3 \pi / 2 \\
    r_i & \pi / 5 & 2 \pi / 15 & 2 \pi / 15 & \pi / 10 & \pi / 10 & \pi / 4 & \pi / 4 \\
    \hline
    \end{array}
    \end{equation}
  and $f_0$ is defined by
  \begin{equation}
    f_0(s)= \begin{cases}2 e^{-\varepsilon^2 / s^2}, & \text { if } s<0, \\ 0, & \text { otherwise. }\end{cases}
  \end{equation}
\end{exmp}

We use $512\times 512$ Fourier modes for the space discretization. The homogeneous Neumann boundary condition is employed. A tiny time step size $\tau=0.0001$ is used to calculate the ``reference'' solution $U_{\text{ref}}$ at $T_1=1$. The time step is chosen to be random. We take
$$\tau_k := \frac{\sigma_k T_1}{\sum_{k=1}^N\sigma_k}, \quad 1\le k \le N,$$ where $\sigma_k$ is a random number uniformly distributed in $[0,1]$ and $N$ is the number of subintervals. The $H^1$-norm error is defined as
\begin{equation}\label{eq:order}
    \begin{aligned}
        e(N):=\left\|U_{\text{ref}}(T_1)-u^N\right\|_{H^1},
    \end{aligned}
\end{equation}
where $u^N$ is the numerical solution at $t=T_1$ using $N$ subintervals. The rate of convergence is computed as
$$\text{rate} \approx \frac{\log(e(N)/e(2N))}{\log(\tau_{N,\max}/\tau_{2N,\max})},$$ where $\tau_{N,\max}$ is the maximum time step in total $N$ steps and so is $\tau_{2N,\max}$. Table \ref{convergence_poly} shows $H^1$-norm error and convergence rate, where the convergence rate is about $\mathcal{O}(\tau_{\max}^2)$.
\begin{table}[h!]
\renewcommand\arraystretch{1.2}
\begin{center}
\def\temptablewidth{1\textwidth}
\caption{Example \ref{accuracy_test}: $H^1$-norm errors of numerical solution at time $T_1 = 1$ for the Allen--Cahn equation with polynomial potential.}
{\rule{\temptablewidth}{0.5pt}}
\begin{tabular*}{\temptablewidth}{@{\extracolsep{\fill}}cccccc}
   $N$   &$200$     &$400$     &$800$
   &$1600$  & $3200$ \\  \hline
  $H^1$-error   & $4.913\times 10^{-5}$   & $1.228\times 10^{-5}$   & $3.042\times 10^{-6}$ & $7.168\times 10^{-7}$ & $1.781\times 10^{-7}$\\[3pt]
rate  & -- &$1.9595$ &$1.9697$ &$2.0515$ & $2.0412$
\end{tabular*}
{\rule{\temptablewidth}{0.5pt}}
\end{center}
\label{convergence_poly}
\end{table}

We then conduct the long time simulations with variable time steps. We apply the following time stepping strategy as \cite{qiao_adaptive_2011}:
  \begin{equation} \label{eq:adaptive}
      \begin{aligned}
          \tau =\max \left(\tau_{\min }, \frac{\tau_{\max }}{\sqrt{1+\alpha\left|E'(t)\right|^2}}\right),
      \end{aligned}
  \end{equation}
where $\tau_{\min}$ and $\tau_{\max}$ are given minimum and maximum steps, and $\alpha>0$ is some constant.
Consequently, fast decay of energy will lead to small time steps, while slow decay of energy (meaning slow change of interface) leads to large time steps.

We compute the ``reference'' solution $u_{\text{ref}}$ at $T_2=10$ with uniform time step $\tau =0.001$. In this test, we choose $\tau_{\min}=0.001$, $\tau_{\max}=0.1$ and $\alpha=100$ to calculate the numerical solution $u_{\text{num}}$. We compute the relative error of $u_{\text{num}}$ as
\begin{equation} \label{eq:err}
    \begin{aligned}
e_{\text{rel}}(T_2)\coloneqq\frac{\left\|u_{\text{ref}}(T_2)-u_{\text{num}}(T_2)\right\|_{H^1}}{\left\|u_{\text{ref}}(T_2)\right\|_{H^1}},
    \end{aligned}
\end{equation}
where $u_{\text{ref}}(T_2)$ and $u_{\text{num}}(T_2)$ represent the solution of $u_{\text{ref}}$ and $u_{\text{num}}$ at $T_2=10$.

 In Fig. \ref{pic2}, it is observed that the $L^{\infty}$-norm of the numerical solutions are bounded by $1$ and the energy is dissipating. In Fig. \ref{pic2}(d), we can observe that the CPU cost of adaptive strategy is much less than that of the uniform-time-step strategy. Meanwhile, the relative error is $e_{\text{rel}}(T_2)\approx 2.5\times10^{-12}$. It indicates that the adaptive strategy is almost as accurate as the uniform-time-step strategy, but more efficient. 
 
\begin{figure}[htbp]
 \centering
\includegraphics[trim = {0in 0.2in 0.in 0},clip,width = 1.0\textwidth]{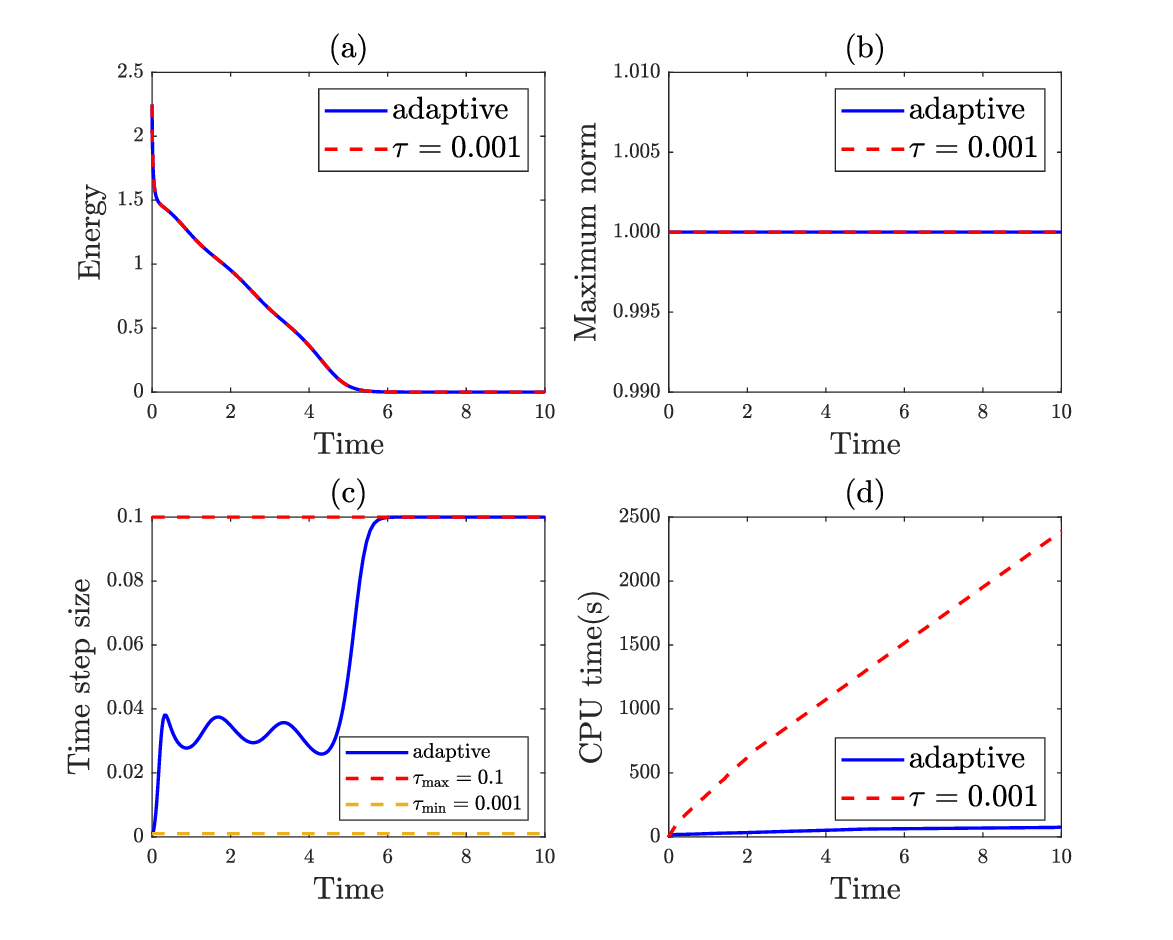}
    \caption{Example \ref{accuracy_test}: (a) Evolution of energy with final time $T_2=10$. (b) Evolution of maximum norm with final time $T_2=10$. (c) Evolution of time step size with final time $T_2=10$. (d) Evolution of CPU cost time with final time $T_2=10$.}
      \label{pic2}
    \end{figure}

\begin{exmp}
\label{example 3}
We consider the Allen--Cahn equation with logarithmic potential, i.e.,
\begin{equation}
    \begin{aligned}
        \partial_t u=\varepsilon^2 \Delta u+\theta_c u-\frac{\theta}{2}(\ln (1+u)-\ln (1-u)).
    \end{aligned}
\end{equation}
We take the initial condition as
$
        u^0(x,y) = 0.5\left( \chi \left(\left(x-\pi\right)^2+\left(y-\pi\right)^2 \le 1.2\right)-0.5\right).
$
The other parameters are $\varepsilon=0.01$, $\theta_c=1$ and $\theta=\frac{1}{4}$ as in \cite{li_stability_2022-1}. 
\end{exmp}

The Strang splitting method can be written as 
\begin{equation}
    \begin{aligned}
        u^{n+1} = \mathcal{S}_{\mathcal{L}}\left(\frac{\tau_{n+1}}{2}\right)\mathcal{S}_{\mathcal{N}}^{(\text{LOG})}(\tau_{n+1})\mathcal{S}_{\mathcal{L}}\left(\frac{\tau_{n+1}}{2}\right)u^n,
    \end{aligned}
\end{equation}
where Runge--Kutta formulae is applied in approximating $\mathcal{S}_{\mathcal{N}}^{(\text{LOG})}(\tau_{n+1})$.
The Butcher tableau is written as
\begin{equation}
\begin{array}{c|cc}
a & a & 0 \\
1-a & 1-2 a & a \\
\hline & \frac{1}{2} & \frac{1}{2}
\end{array}
\end{equation}
with $a=1+\frac{\sqrt{2}}{2}$.
We use $512\times512$ Fourier modes for the space discretization. The homogeneous Neumann boundary condition is employed.
First, we show the $H^1$-norm errors of the numerical solution at $T_1=1$ using \eqref{eq:order}. Table \ref{convergence_log}
shows that the convergence order is about 2. Next, a numerical experiment with $512\times512$ Fourier modes up to $T_2=10$ is carried out. We choose $\tau_{\min}=0.001$, $\tau_{\max}=0.01$ and $\alpha=100$ to calculate the solution $u_{\text{num}}$ with the same adaptive strategy \eqref{eq:adaptive}. We also calculate the ``reference'' solution $u_{\text{ref}}$ with time step $\tau=0.001$.  
In Fig. \ref{pic3}, the $L^{\infty}$-norm of the numerical solutions remains bounded, and the energy dissipates over time. In Fig. \ref{pic3}(d), the CPU cost of adaptive strategy is much less than that of the uniform-time-step strategy. By \eqref{eq:err}, the relative error is about $5.1\times10^{-5}$. It indicates that the adaptive strategy is almost as accurate as the uniform-time-step strategy, but more efficient.
\begin{table}[htbp]
    \renewcommand\arraystretch{1.2}
    \begin{center}
    \def\temptablewidth{1\textwidth}
    \caption{Example \ref{example 3}: $H^1$-norm errors of numerical solutions at time $T_2 = 1$ for Allen--Cahn equation with logarithmic potential.}\label{tab3}
    {\rule{\temptablewidth}{0.5pt}}
    \begin{tabular*}{\temptablewidth}{@{\extracolsep{\fill}}cccccc}
       $N$   &$100$     &$200$     &$400$
       &$800$  & $1600$ \\  \hline
      $H^1$-error   & $1.300\times 10^{-3}$   & $3.228\times 10^{-4}$   & $7.240\times 10^{-5}$ & $1.796\times 10^{-5}$ & $4.468\times 10^{-6}$\\[3pt]
    rate  & -- &$2.0002$ &$2.0189$ &$2.0331$ & $2.0211$
    \end{tabular*}
    {\rule{\temptablewidth}{0.5pt}}
    \end{center}
    \label{convergence_log}
    \end{table}

    \begin{figure}[htbp]
        \centering
       \includegraphics[trim = {0in 0.2in 0.in 0.0in},clip,width = 1.0\textwidth]{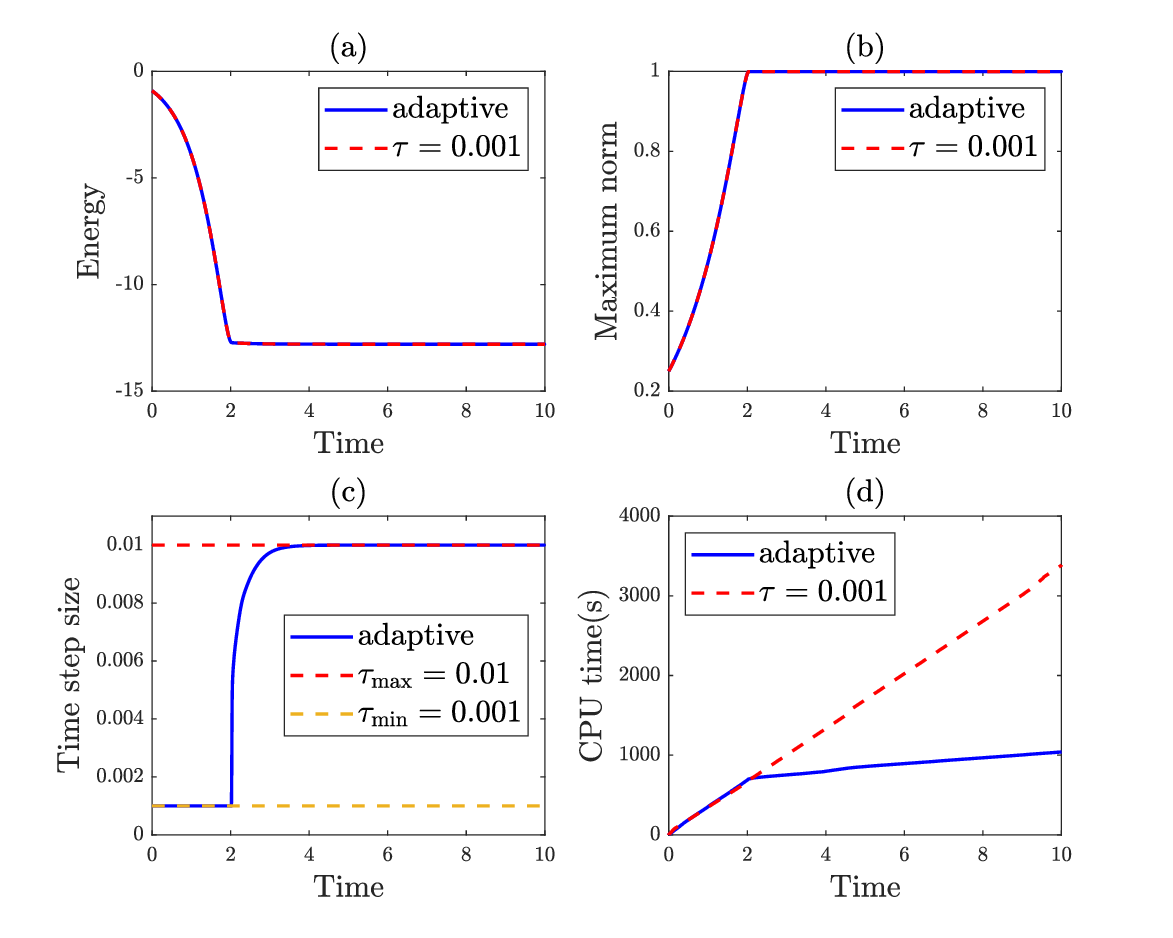}
        \caption{Example \ref{example 3}: (a) Evolution of energy with final time $T_2=10$. (b) Evolution of maximum norm with final time $T_2=10$. (c) Evolution of CPU cost with final time $T_2=10$. (d) Evolution of time step size with final time $T_2=10$. }
        \label{pic3}
    \end{figure}

\begin{exmp}
\label{example 4}
Consider the ternary conservative Allen--Cahn equations:
\begin{equation}
\partial_t u_l=\varepsilon^2 \Delta u_l-\left(f\left(u_l\right)-\beta\left(u_l\right)+\Lambda\left(u_1, u_2, u_3\right)\right) \quad \text { in } \Omega \times(0, T], \quad l=1,2,3 \text {, }
\end{equation}
where 
$
F\left(u_l\right)=\frac{1}{2} u_l^2\left(1-u_l\right)^2, f\left(u_l\right)=F^{\prime}\left(u_l\right)
$ and $\varepsilon = 0.05$. $\beta(u)$ and $\Lambda\left(u_1, u_2, u_3\right)$ are defined as 
\begin{align}
\beta(u_l)&=\frac{1}{\left|\Omega\right|} \int_{\Omega} F^{\prime}(u_l) \dx, \quad l=1,2,3,\quad 
\Lambda\left(u_1, u_2, u_3\right)=-\frac{1}{3} \sum_{l=1}^3\left(f\left(u_l\right)-\beta\left(u_l\right)\right).\label{eq:hyperplanelink}
\end{align}
The energy functional can be written as 
$
    E(u_1,u_2,u_3)=\sum_{l=1}^3\int_{\Omega} \left(\frac{\varepsilon^2}{2}\left|\nabla u_l\right|^2+F(u_l)\right)\dx.
$
We take the following initial conditions
\begin{equation}
\left\{\begin{array}{l}
\phi_l(x)=\operatorname{rand}(x), \\
u_l^0(x)=\frac{\phi_l(x)}{\phi_1(x)+\phi_2(x)+\phi_3(x)},
\end{array}\right.
\end{equation}
where $l=1,2,3$ and $\operatorname{rand}(\cdot)$ is the uniformly distributed random function.   
\end{exmp}

Here, the nonlinear operator 
$\SN (u_1,u_2,u_3)$ is approximated using the Runge--Kutta formula (as in \cite{weng_stability_2024}).
The first equation in \eqref{eq:hyperplanelink} guarantees the mass conservation of each component and the second equation ensures the hyperplane link $u_1+u_2+u_3=1$. 

We use $128\times 128$ Fourier modes for the space discretization. The periodic boundary condition is employed. We choose $\tau_{\min}=0.001$, $\tau_{\max}=0.1$ and $\alpha=100$ to calculate the numerical solution. In Fig. \ref{pic4}, the equilibrium solution exhibits a regular shape with the contact angles of about $\frac{2}{3}\pi$. The evolution of time step size and original energy is displayed in Fig. \ref{pic5}. We can see that the original energy is dissipating over time. 
\begin{figure}[htbp]
     \centering
       \includegraphics[trim = {0.0in 0.in 0.in 0.0in},clip,width = 1.00\textwidth]{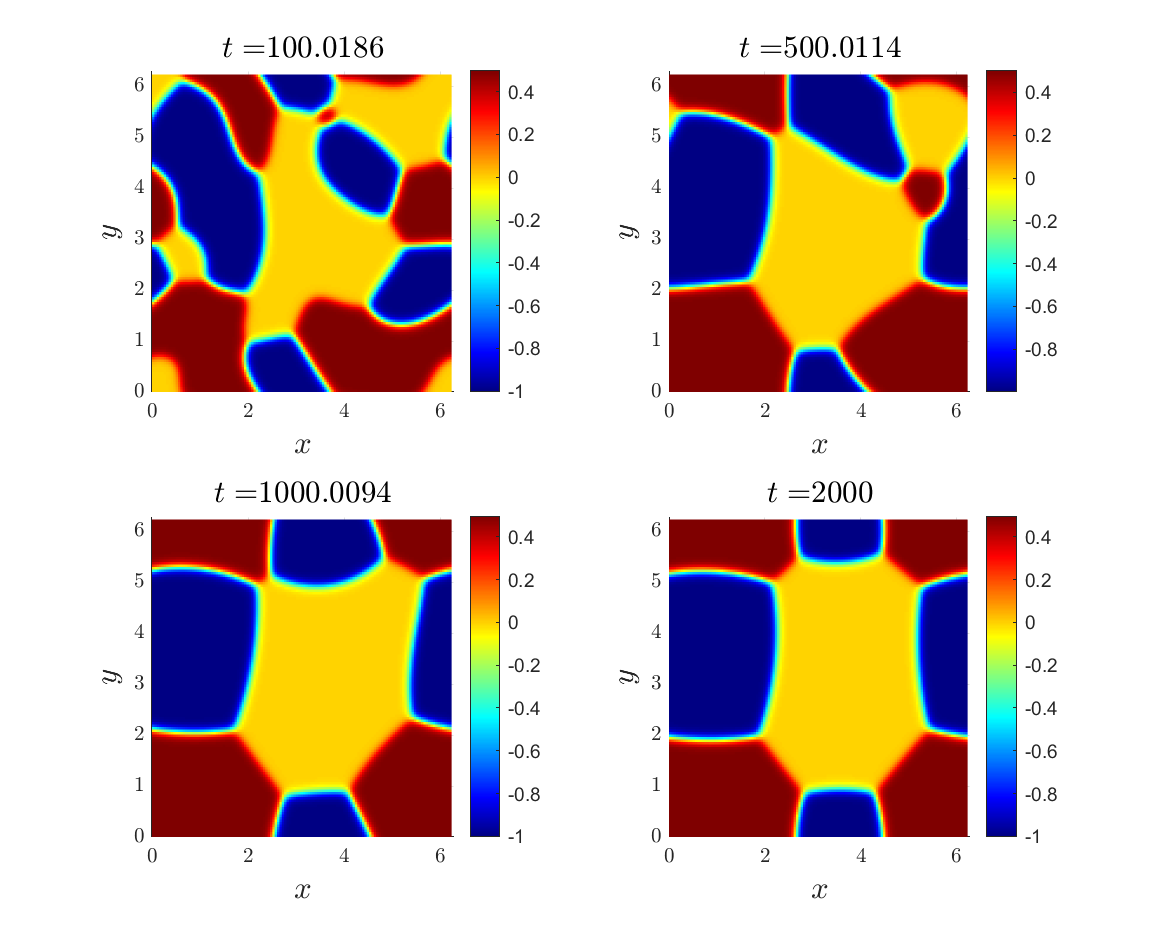}
      \caption{Example \ref{example 4}: Snapshots of $\frac{1}{2}u_1-u_2$ at $t=100.0186,500.0114,1000.0094,2000$ using adaptive time-stepping strategy \eqref{eq:adaptive}.}
      \label{pic4}
\end{figure}

\begin{figure}[htbp]
    \centering
     \includegraphics[trim = {0.in 0.in 0.in 0},clip,width = 1.0\textwidth]{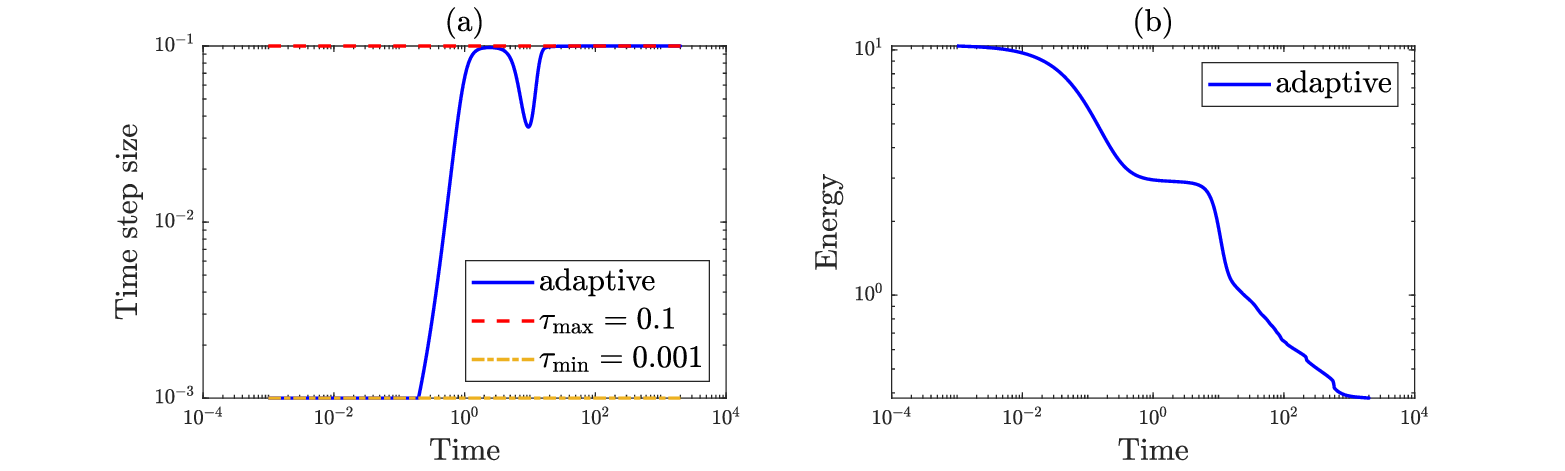}
      \caption{Example \ref{example 4}: (a) Evolution of time step size (in log scale) with final time $T=2000$. (b) Evolution of energy (in log scale) with final time $T=2000$.}
     \label{pic5}
\end{figure}

 \section{Conclusions} \label{section_conclusion}
    In this article, we consider to solve the Allen--Cahn equation with homogeneous Neumann boundary condtion by the Strang splitting method with variable time steps. We establish the $H^k$-norm stability of Strang splitting method for the Allen--Cahn equation with polynomial potential. Furthermore, rigorous $H^k$-norm convergence analysis are given, under the initial regularity assumptions $u^0\in H^{k+6}(\Omega)$. Numerical simulations show the convergence rate, energy dissipation law and the efficiency of the adaptive time-stepping strategy.
    


\section*{Acknowledgments}
 C. Quan is supported by National Natural Science Foundation of China  (Grant No. 12271241), Guangdong Provincial Key Laboratory of Mathematical Foundations for Artificial Intelligence (2023B1212010001), Guangdong Basic and Applied Basic Research Foundation (Grant No. 2023B1515020030), and Shenzhen Science and Technology Innovation Program (Grant No. JCYJ20230807092402004). Z. Tan is supported by the National Nature Science Foundation of China (12371418), Guangdong Natural Science Foundation (2024A1515010694,2022A1515010426), and Guangdong Province Key Laboratory of Computational Science at the Sun Yat-sen University (2020B1212060032). 

\section*{Data Availibility}
Data will be made available on reasonable request.

\bibliographystyle{siamplain}
\bibliography{references}
\end{document}